\def\be#1\ee{\begin{equation}#1\end{equation}}
\newcommand{\bea}{\begin{eqnarray}}
\newcommand{\eea}{\end{eqnarray}}
\newcommand{\beas}{\begin{eqnarray*}}
\newcommand{\eeas}{\end{eqnarray*}}
\newcommand{\bfR}{\mathbb{R}}
\providecommand{\tns}[1]{\ensuremath{\mathbf{\MakeUppercase{#1}}}}
\newcommand{\bG}{\tns{L}}
\newcommand{\bhG}{\widehat{\tns{L}}}
\newcommand{\tA}{\tns{A}}
\newcommand{\tB}{\tns{B}}
\newcommand{\tP}{\tns{P}}
\newcommand{\tF}{\tns{F}}
\newcommand{\pu}{ \texttt{u}}
\newcommand{\pg}{ \texttt{g}}
\newcommand{\cA}{{\cal{A}}}
\newcommand{\mO}{\mathring{\Omega}}
\newcommand{\bfC}{\mathbb{C}}
\newcommand{\tcr}{\textcolor{black}}
\newcommand{\tcm}{\textcolor{black}}
\newcommand{\clr}{\color{black}}
\newtheorem{prop}{Proposition}
\newtheorem{remark}{Remark}
\newtheorem{example}{Example}
\newtheorem{assumption}{Assumption}
\newtheorem{alg}{Algorithm}
\newtheorem{defi}{Definition}
\title{Multi-scale S-fraction reduced-order models for massive wavefield simulations}
\author{Vladimir Druskin\footnotemark[1]%
\and Alexander V. Mamonov\footnotemark[2]%
\and Mikhail Zaslavsky\footnotemark[1]}
\begin{document}
\pagestyle{myheadings}
\maketitle

\renewcommand{\thefootnote}{\fnsymbol{footnote}}

\footnotetext[1]{Schlumberger--Doll Research Center, Cambridge, MA, USA
02139}
\footnotetext[2]{University of Houston, Houston, TX, USA 77004}

\begin{abstract}
\tcr{
We developed a novel reduced-order multi-scale method for solving large time-domain wavefield 
simulation problems.  Our algorithm consists of two main stages. During the first ``off-line'' stage the fine-grid  operator (of the graph Laplacian type} is partitioned on coarse cells (subdomains). Then projection-type multi-scale reduced order models 
(ROMs) are computed for the coarse cell operators. The  off-line stage is embarrassingly parallel 
as ROM computations for the subdomains are independent of each other. It also does not depend on the number 
of simulated sources (inputs) and it is performed just once before the entire time-domain simulation. 
At the second ``on-line'' stage the time-domain simulation is performed within the obtained multi-scale ROM framework. 
The crucial feature of our formulation is the representation of the ROMs in terms of matrix Stieltjes continued 
fractions (S-fractions). The layered structure of the S-fraction   introduces several hidden layers in the ROM  representation, that  results in the block-tridiagonal  dynamic system within each coarse cell.This allows us to sparsify the obtained multi-scale subdomain operator ROMs and to reduce 
the communications between the adjacent subdomains which is highly beneficial for a parallel implementation of 
the on-line stage. Our approach suits perfectly the high performance computing architectures, however in this 
paper we present rather promising numerical results for a serial computing implementation only. 
These results include $3D$ acoustic and multi-phase anisotropic elastic  problems.
\end{abstract}

%=====================================================================
\section{Introduction}

\tcr{
Our main goal is to speed up the simulations of large-scale time-domain wave propagation problems using
model order reduction techniques. To construct reduced order models (ROMs) we need a reference full-scale
model. Such a model comes from a spatial discretization of a second order hyperbolic PDE (or a system of PDEs)
on a reference fine grid, e.g., the scalar wave equation, linear elastodynamic system or Maxwell equations in wave regime.
}

\tcr{
Specifically, we consider the impulse source problem for a semi-discrete (discrete in space, continuous in time) 
system of linear second-order equations
}
\begin{equation}
\tA u-\tB u_{tt}=g \delta(t),\quad u|_{t<0}=0.
\label{eqn:wave_fll}
\end{equation}
where $0\ge\tA=\tns{A}^T\in\bfR^{N\times N}$, $u(t),g\in\bfR^{N}$, $\tB$ is a diagonal matrix with positive 
elements and $\delta(t)$ is the Dirac delta function. {\clr For example,  (\ref{eqn:wave_fll}) can be obtained from  conservative discretization of   acoustic wave equation
\[\nabla \cdot \left[\sigma \nabla \pu(x,t)\right] -\rho \pu_{tt}(x,t)=\pg(x)\delta(t), \quad \pu|_{t<0}=0,\]
on fine grid, with $x\in \bfR^3$ and variable coefficients $\sigma(x)$ and $\rho(x)$ are respectively medium stiffness and density, in  which case $\tns{A}$ is the graph-Laplacian operator and elements of $\tB$ are masses of the grid cells.}

 We assume that the reference fine grid operator $\tA$ is 
sufficiently sparse with nontrivial diagonal entries. Its selfadjoint property implies reflective  boundary conditions, 
e.g., Dirichlet or Neumann ones, though our approach allows  more general absorbing conditions including the 
perfectly-matched layer (PML).

We target remote sensing applications, e.g., seismic exploration, where the wavefield is excited by {\it locally} 
supported sources (inputs) $g\in\bfR^n$ and measured by {\it locally} supported receiver weight distributions 
$q\in\bfR^n $(outputs). For simplicity we assume that they are approximated by discrete $\delta$-functions 
(columns of the identity matrix). In addition,  solution of (\ref{eqn:wave_fll}) is required as a convolution with 
a waveform of limited frequency bandwidth $ [0,\omega_{max}]$ assuming  $\omega_{max}\ll \sqrt {\|A\|}$.
This is consistent with the condition usually required for a good approximation of the continuous problem 
\cite{Marburg2008}.

To fix the idea, we assume that the reference fine grid operator $\tA$ is obtained via second-order finite-differences 
or finite element discretization. In general, such methods  do not require to have a grid conforming to the medium 
discontinuities and allow (sometimes with some loss of convergence order) to use different homogenization and 
error correction techniques to handle curvilinear interfaces, e.g., \cite{Moskow,zaslperg,Leveque}. However, they 
require many of degrees of freedom per wavelength for accurate enough approximation.  In various important 
applications, such as seismic exploration, this results in a sparse operator $\tA$ of dimension of order 
$10^6-10^9$ or even higher.

High-order and spectral methods allow to reduce the size $N$ significantly, however, grid (or elements) adaptation 
conforming to the interfaces is required to maintain high accuracy. Not only this makes gridding technique complicated, 
but may also worsen Courant-Friedrich-Levy (CFL) condition for explicit time-stepping stability. We also note that 
parallelization of low-order methods is significantly more efficient, thanks to the sparsity of the resulting $\tA$.

Conventional multi-scale (MS) approaches are targeted to applications where the discretization of spatial operators 
has to be performed over large-scale domain with multiple small-scale heterogeneities and where accurate 
approximation of fine-scale effects is not required. Variations of MS methods include MS finite elements, or 
superelements \cite{ efendiev, fedorenko, houwu,babuskaosborn}, MS finite volume \cite{tchelepi} as well as 
averaging algorithms \cite{Moskow, zaslperg}. MS methods allow to capture small-scale effects of the composite 
media on medium-scale computational grid (so-called coarse grid). 
% and, consequently, they cannot resolve all the fine-scale features.
Implementation of MS methods is typically split into an off-line part of computing the approximation space at 
each coarse grid cell and an on-line part of solving the problem discretized on a coarse grid. The first stage is 
embarrassingly parallel, i.e. each coarse grid cell can be handled independently. The on-line part requires 
communications between coarse grid cells, however the cost is rather marginal, thanks to low order of 
approximation at each coarse cell. 

MS methods have been successfully applied for solution of elliptic and parabolic PDEs, particularly with application 
to reservoir simulations. However, for hyperbolic problems the application of low-order MS methods is limited 
unless the coarse-scale problem is  oversampled in terms of degrees of freedom per wavelength. Likewise, 
conventional averaging techniques can be applied only when   coarse  cells are much smaller
compared to wavelength. Therefore, high-order MS methods are required for accurate discretization of spatial 
operators in wave problems. First {\it spectrally accurate} multi-scale finite element method for wavefield simulations 
has been developed in \cite{efendiev_acoust}. Spectral convergence is achieved by spanning the approximation 
space for each coarse cell on  a specially chosen {\clr subspace} of solutions of the Dirichlet problem 
in that cell. Similar to the traditional MS 
methods, the approximation problems at each coarse grid cell can be solved independently at the off-line stage. % however the communication and arithmetical  costs of the on-line part is significantly higher due to 
%{\it fully dense structure} of the stiffness (a.k.a. impedance) matrix for each coarse grid cell. 
{\clr However, the communication and arithmetical  costs of the on-line part is affected by appearance  of {\it fully dense blocks} of the size of those subspaces, responsible  for approximation of the wave dynamics   within each coarse cell and its interaction with the neighbors.}

In this work we  address  the above problem, by extending so-called optimal (spectrally matched) grids, 
also known in the literature as finite-difference Gaussian qudrature rules \cite{druskin2000gaussian,asvadurov2000application}. 
This approach yields discretizations with a sparsity pattern of second order finite-difference 
schemes but with spectral convergence at the  corners of the rectangular coarse cells. However, the applications of 
the optimal grids are limited to the media with uniform   coarse cells (below we also call them subdomains). 
The method presented here (first outlined in  \cite{dmz_sfmsfv, dmz_mmrom}) avoids this limitation and allows 
for arbitrary sharp discontinuities within the subdomains. We will call it the multi-scale S-fraction reduced-order 
model  (MSSFROM), because the sparsification is based on Stieltjes continued fraction (a.k.a.  S-fraction) 
representation of the reduced order models (ROMs) of the Neumann-to-Dirichlet maps (NtD) of the subdomains. 
{\clr The layered structure of the S-fraction   introduces several hidden layers in the ROM  representation, that  results in the block-tridiagonal  dynamic system within each coarse cell, thus the communication between coarse cells only involves the state variables from the external layers. This is the  crucial new feature of the MSSFROM compared to the original MS formulation of \cite{efendiev_acoust,Efendiev_elas}.}
To forestall further discussion, we can refer to Figure~\ref{fig:romstenfll} for graphical 
introduction of the MSSFROM concept. In fact, this concept can be traced back to an approach to synthesis of 
complex networks known in electrical engineering and control communities at least from 1960's, when passive 
electrical networks with desired properties are assembled via a system of passive sub-networks (filters) obtaining 
via rational approximation of some a priori given impedance’s, e.g., see \cite{matthaei1964microwave}.

%Our previously developed approach, called s-fraction multi-scale finite volume method (see \cite{dmz_sfmsfv}), 
%allows to construct spectrally accurate discretization that has minimal computation and communication costs 
%of on-line part thanks to sparsified stiffness matrices. However, the algorithm was developed only for the coarse 
%grid cells being Cartesian blocks and for $\tA$ obtained from the second-order discretization scheme.
%That allows to consider the coarse grid cells of more general shape and remove the restriction on the order of discretization. 

\tcr{
Our starting point is a large stiffness  matrix $\tA$ {\clr (of graph-Laplacian type)} and a diagonal mass matrix $\tB$ obtained from the 
reference PDE discretization on a {\it fine} grid. We begin the first, off-line stage by partitioning the fine 
computational grid into multiple subdomains and constructing the corresponding partitions of operators 
$\tA$, $\tB$ using a divide-and-conquer graph partitioning algorithm. Hereafter we also refer to the subdomains 
as the coarse grid cells, by analogy with conventional MS methods. For each coarse grid cell we separate the 
interior and boundary nodes and reduce the entire full-scale problem to the problem on the 
boundaries of coarse grid cells. The elimination of interior degrees of freedom and further conjugation of 
adjacent coarse cells is performed in terms of discrete Neumann-to-Dirichlet maps (NtD), defined via Schur 
complement-like matrices.
}

The NtD in the frequency or time domain can be viewed as a multi-input/multi-output  (MIMO) transfer function 
\tcr{of a dynamical system}. Model order reduction theory provides well-developed tools to construct 
low-dimensional %low-order
approximations of such functions. In this work we adopt the ideas of frequency-limited model reduction that 
targets an a priori chosen frequency-range \cite{gawronski}. It allows to incorporate the information on limited 
signal bandwidth into the construction of optimal rational approximant of the transfer function. The obtained 
reduced-order model (ROM) has significantly fewer degrees of freedom, however, its matrix representation is 
fully dense. This is similar to replacing a low-order discretization with a spectral one. {\clr We show then that  the 
obtained coarse cell ROM  can be expressed in block tridiagonal form thanks to layered structure of the  S-fraction representation. 
The advantage of such transformation is twofold. First,  the conjugation of the adjacent cells  results in the 
 network approximation with matrix coefficients of the size of the S-fraction layers. Second, at on-line stage,  exclusion of  hidden layers of the S-fraction from the conjugation condition} allows to 
minimize not only the cost of algebraic operations with reduced operator, but also the cost of communication 
between adjacent cells. The latter is highly important for implementation on high-performance computing (HPC) 
architectures where the communication cost often dominates the cost of computation.

The paper is organized as follows.
In section \ref{graphpart} we partition the fine-grid dynamical system into the subproblems on coarse cells. 
In section \ref{sect:elimvar} we eliminate the interior unknowns at each coarse cell, and the problem is 
reformulated in terms of Neumann-to-Dirichlet maps (boundary transfer functions) of the coarse cells. 
Then, in section \ref{sect:rom} we discuss in details the model reduction technique used for the approximation 
of Neumann-to-Dirichlet maps of coarse cells. In particular, we compress the inputs and outputs 
(see section \ref{sect:compio}) and then construct a reduced-order model of the transfer function 
(see section \ref{sect:rom_compio}). In section \ref{sparserom}, we obtain sparse (block-tridiagonal) realization 
of the the coarse cell ROM by equivalently transforming it to a matrix S-fraction. The global multi-scale ROM is 
obtained by coupling these sparse ROM realizations via conjugation conditions in section \ref{sect:conj}. 
Then we summarize our approach in section \ref{sect:methsum} and in section~\ref{Stieltjesness} we prove 
Stieltjes property (stieltjesness) of the obtained multi-scale ROM, that implies its stability and energy conservation 
of its minimal realization. A number of numerical examples showing efficiency of our approach are presented 
in section \ref{sect:numex}. In the appendix we describe in more detail the coarse cell partitioning algorithm {\clr for graph-Laplacian type of operators}
(Appendix~\ref{sect:divconq}) and the efficient computation of the S-fraction coefficients via the block-Lanczos 
algorithm (Appendix~\ref{appC}).

%=====================================================================
\section{Grid partitioning and operator splitting} 
\label{graphpart}

% \tcb{[Hereafter throughout the whole paper I tried to use consistently the indices $i,j$ for subdomain 
% indexing, and $k,l$ for internal degrees of freedom, also $l$ for spectral decompositions, please check]}
\tcr{
The very first step of the off-line stage is to partition the reference fine grid into the subdomains (coarse cells)
and obtain the corresponding splitting of the fine grid stiffness matrix $\tA$ and mass matrix $\tB$.
For this we will need the following elementary definitions from graph theory.
}

Let $a_{kl}$,  $1 \le k,l \le N$ be the elements of $\tA$ and $\Omega$ be the nodal set of the graph associated 
with $\tA$, i.e., the set of reference fine grid nodes. The nodes $k,l \in \Omega$ are called adjacent iff $a_{kl}\ne 0$.   
For any $\mO \subset \Omega$ we call its neighborhood $\cA(\mO)$ the set of all adjacent nodes of $\mO$.
If $\tA$ is a full matrix, then $\forall \mO \subset \Omega$ $\cA(\mO)=\Omega$. In the other extreme case of 
diagonal $\tA$, obviously, $\cA(\mO)=\mO$. 

We assume, that  $\Omega$ can be partitioned into $N_c$ ($N_c\ge 2$) nonempty (possibly intersecting)
subsets $\Omega_i$ with $N_i$ nodal points (fine grid nodes) in each that we refer to as the subdomains
or coarse cells. For all $i \ne j $ the subdomains $\Omega_i$ and $\Omega_j$ are coupled via their 
intersection $\Gamma_{ij} = \Omega_i \cap \Omega_j$, i.e.,
\be
\label{part} 
\Gamma_{ij} = \cA(\mO_i)\cap \cA(\mO_j),
\ee 
where
$\mO_i = \Omega_i \setminus \Gamma_i$ and 
\tcr{$\Gamma_i = \bigcup_{j \ne i} \Gamma_{ij}$}.
We will call $\mO_i$ and ${\clr \Gamma_{i}}$ \tcr{respectively the interior and the boundaries of 
subdomain $\Omega_i$}.

We will limit our consideration to the case when the outputs and inputs and are supported on the partitioning  
``skeleton'' $\Gamma=\bigcup_{j=1}^{N_c}\Gamma_j$, i.e.,  
\be
\label{gsupport} 
q|_{\mO}=0,\ g|_{\mO}=0, \qquad \mO=\bigcup_{j=1}^{N_c} \mO_j.
\ee
\tcr{
\begin{remark} In addition to its main objective to minimize the arithmetical cost and communications, 
our domain partitioning algorithm is constrained by condition (\ref{gsupport}) with inputs $g$ and outputs $q$  
(approximated by discrete $\delta$-functions) given a priori.
\end{remark}
}

We define prolongation operators ${\clr \tP_i \in \bfR^{N \times N_i}}$ via the action on $x \in \bfR^{N^i}$ as
\tcr{\be
\label{eqn:prolong}
%\left(\tns{P}_ix\right)_j=x_j,\ if\ j\in\Omega_i\ and\ \left(\tns{P}_ix\right)_j=0\ otherwise.
\left( \tns{P}_i x \right)_k = \left\{ \begin{tabular}{ll}
$x_k$, & if  $k \in \Omega_i$ \\
$0$, &  otherwise
\end{tabular}\right.
\ee }
We split the operators \tcr{$\tA$ and $\tB$} respectively on $\tA_i, \tB_i\in \bfR^{N_i\times N_i}$ defined 
on subdomains $\Omega_i$ so that for any $\omega \in \bfC$ we have
\be 
\label{splitN_c} 
\tns{A}+\omega^2\tB=\sum_{i=1}^{N_c}{\tns{P}_i(\tns{A}_i+\omega^2\tB_i)\tns{P}_i^T}.
\ee
	
We require $\tA_i$ and $\tB_i$ to be nonnegative and positive definite matrices respectively. 
Let us consider some simple examples, illustrating such splitting.

\begin{example} \label{1d_example}
Consider a 1D wave equation 
\begin{equation}
\label{eqn:lapl1d}
u_{xx}-u_{tt}=g
\end{equation} 
on an interval $(-1,1)$ with homogeneous Dirichlet boundary conditions on $\partial\Omega$. Let the equation be 
discretized on a uniform finite-difference grid $\{x_i = ih\}^{n}_{i=-n}$ using a second-order scheme in space 
$\tns{A}u-u_{tt}=g$, where 
$$
\left(\tns{A}u\right)_k = \frac{1}{h}\left(\frac{u_{k+1}-u_k}{h}-\frac{u_{k}-u_{k-1}}{h}\right), 
\quad u_{-n}=u_n=0.
$$ 
Let $\Omega_1 = \{ k \;|\; -n \le k \le 0\}$, $\Omega_2 = \{ k \;|\; 0 \le k \le n \}$ and, consequently, 
$\Gamma=\{k=0\}$. 
The matrix $\tns{A}\in\bfR^{(2n-1)\times (2n-1)}$ in this case is diagonally dominant.

\tcr{The split operators are defined as follows:}
\begin{align*}
\left(\tns{A}_1 u\right)_k = & \left(\tns{A}u\right)_k, \quad k<0,\\
\left(\tns{A}_1 u\right)_0 = &-\frac{1}{h}\left(\frac{u_{0}-u_{-1}}{h}\right),\\
\left(\tns{A}_2 u\right)_k = & \left(\tns{A}u\right)_k, \quad k>0,\\
\left(\tns{A}_ 2u\right)_0 = & \frac{1}{h}\left(\frac{u_{1}-u_0}{h}\right).
\end{align*}

Moreover, since $a_{00}=-\sum_{k \ne 0} a_{0k}$,  this is the unique splitting with nonnegative 
definite $\tA_i$ for the given partitioning.
 
The splitting of $\tns{B}$ (which is the identity matrix here) into $\tns{B}_1\in\bfR^{n\times n}$ and 
$\tns{B}_2 \in \bfR^{n \times n}$ is not unique. Indeed, it can be chosen as 
\begin{align*}
\left(\tns{B}_1 u\right)_k = & 1, \quad k<0, \quad \left(\tns{B}_1 u\right)_0=\beta,\\
\left(\tns{B}_2 u\right)_k = & 1, \quad k>0, \quad \left(\tns{B}_2u \right)_0=1-\beta,
\end{align*}
with any  $\beta \in (0,1)$. From the discretization point of view, this non-uniqueness is caused by an ambiguity in 
choosing the dual grids nodes, i.e. the nodes to which the approximation of the derivative is assigned.
%In particular, for a given second order discretization scheme, a set of dual grids nodes can be arbitrary shifted 
%(see Fig.\ref{fig:1d_example}). That shift precisely determines the constant $\beta$ above. Indeed, the action 
%of operators $\tns{B}^{-1}_1\tns{A}_1$ and $\tns{B}^{-1}_2\tns{A}_2$ on $\Gamma$ is given by
%$$\left(\tns{B}^{-1}_1\tns{A}_1u\right)_0=-\frac{1}{\beta h}\left(\frac{u_{0}-u_{-1}}{h}\right),$$
%$$\left(\tns{B}^{-1}_2\tns{A}_2u\right)_0=\frac{1}{(1-\beta)h}\left(\frac{u_{1}-u_0}{h}\right).$$
This is equivalent to discretizing (\ref{eqn:lapl1d}) in $\Omega_1$ and $\Omega_2$ independently with Neumann 
conditions at $\Gamma$, with the first dual steps scaled by $\beta$ and $1-\beta$ respectively. 
%This analogy between partitions and independent discretizations on coarse cells has been exploited in 
%our earlier work \cite{dmz_sfmsfv}. 
We choose $\beta=\frac{1}{2}$ to approximately balance the condition numbers of the splitted 
matrix pencils.
\end{example}

%\begin{figure}[htb]
%\centering
%\includegraphics[width=0.8\columnwidth]{1d_exmpl.png}
%\caption{1D discretization grids. Blue circles are solution nodes (primary grid), green crosses are 
%derivative nodes (dual grid). Dual grid is not centered.}
%\label{fig:1d_example}
%\end{figure}

\begin{example}
\label{2dom_ex}
Consider a 2D wave equation 
\be
\label{eqn:lapl}
\Delta u - u_{tt} = g 
\ee
in a rectangular domain $(-1;1)\times(0;1)$ with homogeneous Dirichlet boundary conditions on 
$\partial\Omega$. Let the equation be discretized on a uniform finite-difference grid 
\tcr{$$
\left\{(x_p, y_q) \;|\; x_k=ph, \; y_l=qh \right\}^{n,n}_{k=-n,l=0}
$$}
using a second-order scheme in space $\tns{A}u-u_{tt}=g$, where the action of the symmetric operator 
$\tns{A} \in \bfR^{(2n-1)(n-1) \times (2n-1)(n-1)}$ is defined as
\tcr{\be
\begin{split}
\left(\tns{A}u\right)_{p,q} = & 
        \frac{1}{h}\left(\frac{u_{p+1,q}-u_{p,q}}{h}-\frac{u_{p,q}-u_{p-1,q}}{h}\right) \\
+ & \frac{1}{h}\left(\frac{u_{p,q+1}-u_{p,q}}{h}-\frac{u_{p,q}-u_{p,q-1}}{h}\right)
\end{split}
\ee}
with boundary conditions $u_{p,0}=u_{p,n}=u_{-n,q}=u_{n,q}=0.$

We begin with the two-domain partitioning of 
\tcr{$$
\Omega = \{ (p,q) \;|\; p=-n,\ldots,n; \; q=0,\ldots,n\}
$$}
and define the splitting as follows. Let 
\tcr{$$
\Omega_1 = \{ (p,q) \;|\; -n \le p \le 0 \}, \quad \Omega_2 = \{ (p,q) \;|\; 0 \le p \le n \}
$$}
and, consequently, \tcr{$\Gamma = \{(0, q) \;|\; q = 0, \ldots, n \}$}. 
\tcr{The split operators are defined as}
\begin{align*}
\left(\tns{A}_1 u\right)_{p,q} = & \left(\tns{A} u\right)_{p,q}, \quad p<0,\\
\left(\tns{A}_1 u\right)_{0,q} = & -\frac{1}{h}\left(\frac{u_{0,q}-u_{-1,q}}{h}\right)
+ \frac{0.5}{h}\left(\frac{u_{0,q+1}-u_{0,q}}{h}-\frac{u_{0,q}-u_{0,q-1}}{h}\right)
\end{align*}
with boundary conditions $u_{p,0}=u_{p,n}=u_{-n,q}=0$ and
\begin{align*}
\left(\tns{A}_2 u\right)_{p,q} = & \left(\tns{A} u\right)_{p,q}, \quad k>0,\\
\left(\tns{A}_2 u\right)_{0,q} = & \frac{1}{h}\left(\frac{u_{1,q}-u_{0,q}}{h}\right)
+ \frac{0.5}{h}\left(\frac{u_{0,q+1}-u_{0,q}}{h}-\frac{u_{0,q}-u_{0,q-1}}{h}\right)
\end{align*}
with boundary conditions $u_{p,0}=u_{p,n}=u_{n,q}=0$;
\begin{align*}
\left(\tns{B}_1 u\right)_{p,q} = & 1, \quad p<0, \quad \left(\tns{B}_1 u\right)_{0,q}=\frac{1}{2}, \\
\left(\tns{B}_2 u\right)_{p,q} = & 1, \quad p>0, \quad \left(\tns{B}_2 u\right)_{0,q}=\frac{1}{2}.
\end{align*}
In this scenario $\tA$ is diagonally dominant, and so is its splitting \tcr{$\tA_1$, $\tA_2$}.
\end{example}

\begin{example}
\label{multidom_ex}
Now let us consider a multi-domain splitting of 
%$\tns{A}u-u_{tt}=g$ 
$\tns{A}, \tns{B}$ from Example~\ref{2dom_ex} on a regular square coarse grid. 
To distinguish the coordinate directions explicitly, we use here the notation with two indices. 
We define for $i=1,\ldots,N^x_c$ and $j=1,\ldots,N^y_c$ the partitioning 
$$
\Omega_{i,j} = \{(p,q) \;|\; p = n_x (i-1)-n+1, \ldots, n_x i-n, \; q = n_y(j-1)+1,\ldots, n_y j \}
$$
of $\Omega=\{(p,q) \;|\; p=-n,\ldots,n, \; q=1,\ldots,n \}$. 
Here $n_x N^x_c = 2n$, $n_y N^y_c = n$ and $N^x_c N^y_c = N_c$ where $N^x_c, N^y_c$ 
are the numbers of coarse cells (subdomains) in each direction and $n_x$, $n_y$ are the dimensions 
of coarse cells. Partitioning skeleton is given by
\clr{
\begin{eqnarray} 
\nonumber \Gamma=\{(n_x i-n,j) \;|\; i=1, \ldots, N^x_c-1, \; j=1, \ldots ,n \}\cup\\
 \{(i, n_y j) \;|\; i=-n, \ldots, n , \; j=1, \ldots, N^y_c-1 \}.
\end{eqnarray}}

We split the operators $\tA$ and $\tB$ as 
%\bea
\begin{align*}
\left(\tns{A}_{i,j} u\right)_{p,q} = & \frac{\gamma_x}{h}
\left( \alpha^+_x \frac{u_{p+1,q}-u_{p,q}}{h} - \alpha^-_x \frac{u_{p,q}-u_{p-1,q}}{h}\right) + \\
& \frac{\gamma_y}{h} \left( \alpha^+_y \frac{u_{p,q+1}-u_{p,q}}{h} - \alpha^-_y \frac{u_{p,q}-u_{p,q-1}}{h}\right), \\
\left(\tns{B}_{i,j} u\right)_{p,q} = & \beta_x \beta_y u_{p,q},
\end{align*}
%\eea 
for $(p,q) \in \Omega_{i,j}$, where the coefficients 
$\alpha^+_x, \alpha^-_x, \alpha^+_y, \alpha^-_y, \gamma_x, \gamma_y, \beta_x, \beta_y$ 
are equal to $1$ for $(k,l) \notin \Gamma$ and are defined as follows otherwise 
\begin{itemize}
\item if $p = n_x (i-1) - n$ then $\alpha^+_x = 1$, $\alpha^-_x=0$, $\gamma_y=0.5$, $\beta_x = 0.5$
\item if $p = n_x i - n$ then $ \alpha^+_x = 0$, $ \alpha^-_x = 1$, $\gamma_y=0.5$, $\beta_x = 0.5$
\item if $n_x (i-1) - n < p <n_x i - n$ then $\alpha^+_x = \alpha^-_x =0.5$, $\gamma_y=1$, $\beta_x = 1$
\item if $q = n_y (j-1) $ then $\alpha^+_y = 1$, $\alpha^-_y = 0$, $\gamma_x=0.5$, $\beta_y = 0.5$
\item if $q = n_y j$ then $\alpha^+_y = 0$, $\alpha^-_y = 1$, $\gamma_x=0.5$, $\beta_y = 0.5$
\item if $n_y (j-1) < q < n_y j$ then $\alpha^+_y = \alpha^-_y = 0.5$, $\gamma_x=1$, $\beta_y=1$
\end{itemize}
\end{example}

To optimize the cost and simplify the partitioning, we usually consider partitioning into regular coarse grids with 
cubic cells $\Omega_i$  (corresponding to actual subdomains of the computational domain), similar to the 
one shown in Example~\ref{multidom_ex}. Generally, such partitioning and splitting is an intuitively obvious 
procedure for $\tns{A}$ obtained from a discretization of an elliptic operator in regular domains with regular 
fine grids. 

However, for more complicated discretizations and vectorial PDEs (e.g., elasticity), one can perform graph 
partitioning via Algorithm~\ref{alg2} from Appendix~\ref{sect:divconq}. By construction it yields diagonal 
$\tB_i\succ 0$ and for (non-strictly) diagonally dominant $\tA$ it guarantees (non-strict) diagonal dominance 
of $\tA_i$ (see Proposition~\ref{prop2} in Appendix~\ref{sect:divconq}).

%=====================================================================
\section{Elimination of interior nodes via boundary transfer functions}
\label{sect:elimvar}

%We exclude from consideration the trivial case  $\Gamma_i=\emptyset$, that  yields the decoupled problem  
%on $\mO_i$. We will call the interior and boundary nodes the elements of $\mO_i$ and $\Gamma_i$ respectively.

\tcr{
In the previous section we defined the interior $\mO_i$ and boundary nodes $\Gamma_i$ for each 
subdomain $i=1,\ldots,N_c$. The next step is to eliminate the interior nodes $\mO_i$. This
is done via a boundary transfer function, a Neumann-to-Dirichlet map defined at the boundaries $\Gamma_i$.
A natural setting to define boundary transfer functions is the frequency (Laplace) domain.
}

We consider the Laplace-domain counterpart of (\ref{eqn:wave_fll_fd}) (with some abuse of notation 
we denote the time and frequency-domain solutions by $u$)
\begin{equation}
\tns{A}u+\omega^2 \tB u = g,
\label{eqn:wave_fll_fd}
\end{equation}
with Laplace frequency $\omega$ satisfying the condition 
\be
\label{noninspectr}
-\omega^2\in \bfC \setminus \left(\bfR_- \cup \{0\} \right). 
\ee 
This condition assures non-singularity of $\tns{A} + \omega^2 \tB$ and  $\tns{A}_i + \omega^2 \tB_i$, 
$i=1,\ldots,N_c$.
From (\ref{splitN_c}) we obtain 
\be
\label{wave_split_fd}
\sum_{i=1}^{N_c}{\tns{P}_i(\tns{A}_i+\omega^2\tB_i)\tns{P}_i^T}u=g.
\ee

\tcr{
We define the prolongation operator $\tP^{\Gamma}_i\in\bfR^{N_i\times K_i}$ from 
$\Gamma_i$ to $\Omega_i$ as
\be
\label{eqn:prolongomega}
\left( \tns{P}^{\Gamma}_i x \right)_k = \left\{ \begin{tabular}{ll}
$x_k$, & if  $k \in {\clr \Gamma_i}$ \\
$0$, & otherwise
\end{tabular} \right., \quad \mbox{for } x \in \bfR^{K_i}.
\ee 
}
Here we assume that $N_i \gg K_i$, where $K_i$ is the the number of nodes in $\Gamma_i$. This assumption is 
targeted to the case when $\tA$ is obtained via a low order discretization of a second order elliptic PDE operator 
or system. In particular, the case of second order discretizations on regular grids yields $\Gamma_i$ 
\tcr{that are one reference fine grid node ``thick''}, see Example~\ref{multidom_ex}. 
%\tcb{[Paragraph moved here from the beginning of the section]}
  
Using (\ref{gsupport}) we eliminate the interior nodes from (\ref{wave_split_fd}), by transforming it to 
\be
\label{wave_split_fd_el} 
\sum_{i=1}^{N_c}{\tns{P}_i \tP^{\Gamma}_i 
\left ( \tns{F}_i (\omega^2) \right)^{-1} 
\left( \tns{P}_i \tP^{\Gamma}_i \right)^T} u = g,
\ee

where 
\be
\label{tfcur}
\tns{F}_i(\omega^2)={\tns{P}^\Gamma_i}^T(\tns{A}_i+\omega^2\tns{B}_i)^{-1}\tns{P}^\Gamma_i
\in \bfC^{K_i\times K_i},
\ee
is the so-called matrix transfer function, a.k.a. Neumann-to-Dirichlet map, Weyl or impedance matrix valued function 
which is closely related to Schur complements of $\Omega_i$. The stieltjesness of $\tns{F}_i$ (more precisely of its 
equivalent transform $\tns{F}_i(\omega^2)$\ ) will play a fundamental role in our further derivations. We will use the 
following definition of the Stieltjes matrix-valued functions introduced by Mark Krein for the scalar case, e.g. see \cite{bolotnikov1999operator,fritzsche2015matrix, arlinskii2011conservative}.

\begin{defi}
\label{stieltjes}
Let $\tF: \bfC \setminus (0, +\infty) \rightarrow \bfC^{p\times p}$.
Then $\tF$ is called a
$(0, +\infty)$-Stieltjes function of order $p$ if $\tF$ satisfies the following three conditions:
\begin{enumerate}
\item $\tF$ is holomorphic in $\bfC \setminus (0, +\infty)$.
\item For all $z$ with $\Im z>0$ the matrix $\Im(\tF)$ is non-negative definite Hermitian.
\item For all $z\in (−-\infty, 0)$ the matrix $\tF(z)$ is non-negative definite Hermitian.
\end{enumerate}
\end{defi}

The transfer function can be written via the spectral decomposition of the matrix pencil 
$(\tns{A}_i, \tns{B}_i)$ as  
$$ 
\tns{F}_i(\omega^2)=\sum_{l=1}^{N_i} \frac{V_l^i{V^i_l}^T}{\lambda_l^i+\omega^2},
$$
were $\lambda_l^i\in\bfR_-$ are the eigenvalues of matrix pencil $(\tns{A}_i,\tns{B}_i)$ and  $V_l^i\in\bfR^ {K_i}$ 
are the restrictions of the corresponding eigenvectors on $\Gamma_i$. It is easy to see that $\tns{F}_i(\omega^2)$ 
satisfies all three conditions of Definition~\ref{stieltjes}, so it is {\it the Stieltjes matrix valued function} of 
$\omega^2$. Moreover, it can be easily shown via the Schur complement formula, that 
$\left( \tns{F}_i(\omega^2) \right)^{-1}$ is linear with respect to 
$(\mathring{\tns{A}}_i+\omega^2\mathring{\tns{B}}_i)^{-1}$, where 
$\mathring{\tns{A}}_i\le 0$ and $\mathring{\tns{B}}_i>0$  are the diagonal blocks respectively 
of $\tns{A}_i$ and $\tns{B}_i$ (equivalently, of $\tns{A}$ and $\tns{B}$) corresponding to 
${\mO}_i$. This allows to replace non-negative definiteness with positive-definiteness in 
conditions~2 and 3 of Definition~\ref{stieltjes}, that yields the equivalence of (\ref{wave_split_fd}) and 
(\ref{wave_split_fd_el}).

Denote 
$$
u|_{\Gamma_i} = {\tP_i^\Gamma}^T{\tP_i}^T u \in \bfR^{K_i},
$$ 
where ${\tP_i^\Gamma}^T{\tP_i}^T$ is obviously the restriction operator from $\Omega$ to $\Gamma_i$. 
Then we can equivalently rewrite (\ref{wave_split_fd_el}) in terms of  the boundary restrictions of $u$ as
\be 
\label{BIE}
\sum_{i=1}^{N_c} \tP_i \tP_i^\Gamma \left( \tF_i(\omega^2) \right)^{-1} u|_{\Gamma_i} = g.
\ee

For $N_c>2$ there may exist a nonempty ``corner'' set $\Gamma^{\cap}=\bigcup_i\bigcup_{j \ne j'} \Gamma_{ij}\cap\Gamma_{ij'}.$ 

In the case of regular partitioning, similar to the one described in the Example~\ref{multidom_ex}, dimension of such 
set is $o(\sum_{i=1}^{Nc} K_i)$. {\clr We also note that from a topological point of view the ``corner'' set in $3D$ regular cells includes not only corners but also the entire edges. To simplify the derivation, we consider the case with empty ``corner'' set}
\be
\label{nocorners} 
\Gamma^{\cap}=\emptyset.
\ee  
The fine grid discretization can be modified to satisfy such condition by introducing hanging nodes and removing 
nodes from $\Gamma^{\cap}$ {\clr (see Appendix~\ref{sect:cor_set_rem}). We show how it works explicitly for example \ref{multidom_ex}. 

\begin{example}
\label{cs_multidom_ex}
Let us consider the removal of the corner set $$
\Gamma^{\cap}=\{(n_x i-n,n_y j) \;|\; i=1, \ldots, N^x_c-1, \; j=1, \ldots, N^y_c-1 \}$$ of the operator from Example \ref{multidom_ex}. For each $(p;q)\in\Gamma^{\cap}$, we introduce four hanging nodes $(p,q^u), (p,q^l), (p^r,q), (p^l,q)$ and replace $(\tA u)_{p,q}$ and $(\tB u)_{p,q}$ by 
\begin{align*}
(\tA' u)_{p,q^u}= & \frac{1}{2h}
\left(\frac{u_{p+1,q^u}-u_{p,q^u}}{h} - \frac{u_{p,q^u}-u_{p-1,q^u}}{h}\right) +
\frac{1}{h} \left(\frac{u_{p,q^u+1}-u_{p,q^u}}{h}\right), \\
\left(\tns{B}' u\right)_{p,q^u} = &0.5u_{p,q^u},\\
(\tA'u)_{p,q^l}= & \frac{1}{2h}
\left(\frac{u_{p+1,q^l}-u_{p,q^l}}{h} - \frac{u_{p,q^l}-u_{p-1,q^l}}{h}\right) +
\frac{1}{h} \left(\frac{u_{p,q^l-1}-u_{p,q^l}}{h}\right), \\
\left(\tns{B}' u\right)_{p,q^l} = &0.5u_{p,q^l},\\
(\tA' u)_{p^r,q}= &\frac{1}{h} 
\left(\frac{u_{p^r+1,q}-u_{p^r,q}}{h}\right)+
  \frac{1}{2h}\left(\frac{u_{p^r,q+1}-u_{p^r,q}}{h} - \frac{u_{p^r,q}-u_{p^r,q-1}}{h}\right)\\
\left(\tns{B}' u\right)_{p^r,q} = &0.5u_{p^r,q},\\
(\tA' u)_{p^l,q}= &\frac{1}{h} 
\left(\frac{u_{p^l-1,q}-u_{p^l,q}}{h}\right)+
  \frac{1}{2h}\left(\frac{u_{p^l,q+1}-u_{p^l,q}}{h} - \frac{u_{p^l,q}-u_{p^l,q-1}}{h}\right)\\
\left(\tns{B}'u\right)_{p^l,q} = &0.5u_{p^l,q}.
\end{align*}

$\tB'$ is diagonal and $\tA'$ is assumed to be symmetric: we define its action on vectors with non-zero elements at hanging node $(p;q)$ by symmetry via $(\tA' u)_{p,q}$. In Figure~\ref{fig:nocornerset} we show the modified graph of $\tA$ corresponding to the partitioning from 
Example~\ref{multidom_ex} with the corner set removed.
\end{example}

The same approach can be straightforwardly extended to  the case of $3D$ (and multidimensional)   Cartesian grids, where ``corner'' set also include the edges of  the coarse cells. 
For  regular enough solutions with uniformly bounded first differences  such a modification, obviously  leads to $O\left(\frac{b_{kk}}{\mathrm{Tr}\hspace{1pt} B}\right)$ error, where $\frac{b_{kk}}{\mathrm{Tr}\hspace{1pt} B}$ is the relative mass of the corner point.  When both the fine and coarse grids are uniform Cartesian,  the total  relative mass  of the corner points is inversely proportional to the coarse cell diameter squared. Thus,  for large enough coarse cells removal of the corner points adds the error, consistent with the second order discretization error on the fine grid.
In all our numerical experiments this modification 
results in a negligible error, however solution correction can be made at each time step, if needed 
(see Remark \ref{rem:solcorr}).

\begin{figure}[htb]
\centering
\includegraphics[width=\columnwidth]{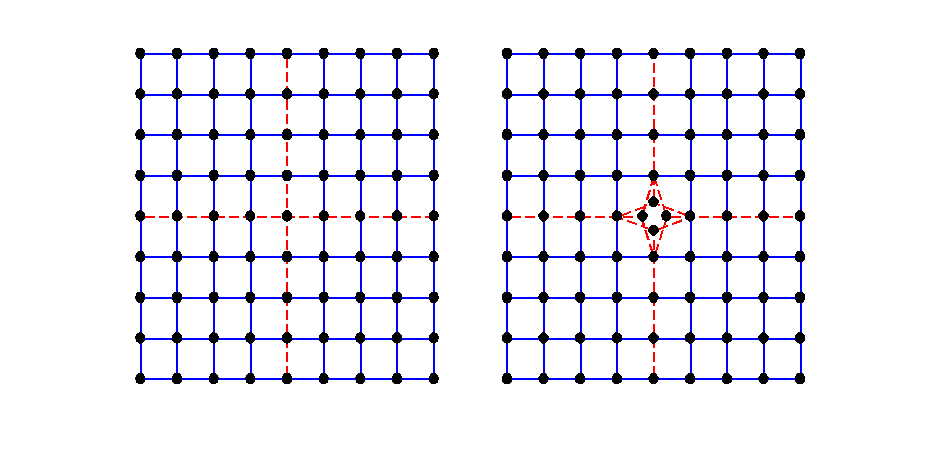}
\caption{Graph of $\tA$ from Example \ref{multidom_ex} (left) and its modification without corner set (right). 
$\Gamma$ is shown by red lines. $\Gamma^\cap$ consists of one node (center node on the left plot).
}
\label{fig:nocornerset}
\end{figure}

Introducing a small error and considering the case (\ref{nocorners}) we can simplify the system (\ref{BIE}) to }
\be
\label{BEINC}
\tP_{ij}^T \left( \tF_i (\omega^2) \right)^{-1} u|_{\Gamma_i} + 
\tP_{ji}^T \left( \tF_j (\omega^2) \right)^{-1} u|_{\Gamma_j} = 
g|_{\Gamma_{ij}}, \quad \forall (i,j):\;  i\ne j,\; \Gamma_{ij}\ne\emptyset, 
\ee
where $\tP_{ij}^T\in\bfR^{K_{ij}\times K_i}$  is the restriction operator from $\Gamma_i$ to 
$\Gamma_{ij}$ and $K_{ij}$ is the dimension of $\Gamma_{ij}$.

%=====================================================================
%\section{Reduced order models for $\Omega_i$}
\section{\tcr{Reduced order models on subdomains}}
\label{sect:rom}

%The boundary solutions $u|_{\Gamma_i}, i=1,\ldots,N_c$ in (\ref{BEINC}) are coupled via matrix transfer functions 
%(\ref{tfcur}), so the model reduction of $\tF_i(\omega^2),i=1,\ldots, N_c$ is critical for the computational efficiency 
%of our algorithm. The model reduction  can be in shorthand represented via two step process.

After the partitioning of $\Omega$ and the corresponding splitting of $\tns{A}, \tns{B}$ is computed and the 
matrix transfer functions (\ref{tfcur}) are defined on the subdomain boundaries $\Gamma_i$, the next point to 
address is the efficient computation of transfer functions $\tF_i(\omega^2)$, $i=1,\ldots, N_c$. To this effect we 
replace the true $\tF_i(\omega^2)$ with reduced-order models.
The complexity of the matrix-valued transfer function can be described by its McMillan degree, i.e., the dimension of the equivalent minimal realization via the first order dynamical system \cite{mcmillan}.
Thus, the McMillan degree of the exact transfer function $\tF_i(\omega^2)$ is generally equal  to $2N_i$.

Model order reduction is a two step process.

First, in section~\ref{sect:compio} we consider the reduction of dimensionality of boundary degrees of freedom 
from $K_i$ to $\widetilde{K}_i \ll K_i$. This is achieved by projecting $\tF_i(\omega^2)$ on a tall skinny orthogonal 
matrix $\tns{S}_i\in \bfR^{K_i\times {\widetilde K}_i}$, $\tns{S}_i^T \tns{S}_i = \tns{I}$, which is obtained via an 
approximate proper orthogonal decomposition of the boundary restrictions of solutions $u|_{\Gamma_i}$. 
The resulting projected transfer function has the form
\be
\label {project}
\widetilde{\tF}_i(\omega^2)=\tns{S}_i^T \tF_i(\omega^2) \tns{S}_i \in\bfR^{{\widetilde K}_i\times {\widetilde K}_i}.
\ee
%assuming ${\widetilde K}_i\ll K_i$, 

Second, in section~\ref{sect:rom_compio} we compute matrix Pad\'e approximant 
$\widetilde{\tF}^m_i(\omega^2)$ of $\widetilde{\tF}_i(\omega^2)$ 
via Pad\'e-Krylov connection satisfying $2m$ matching conditions.

As we shall see in section~\ref{sect:conj}, the first step decreases  both the communication and  arithmetical cost, 
while the second step mainly reduces the arithmetical cost. 
%In this section we shall outline these two stages.
As a result of these two steps, we reduce the McMillan degree of the final transfer function  from $2N_i$ to $2m\widetilde K_i$.

%=====================================================================
\subsection{Proper orthogonal decomposition of boundary solution restrictions}
\label{sect:compio}

The column space of $\tns{S}_i$ should produce a good approximation basis for the boundary restrictions of 
all possible propagative solutions. Optimal compression, in principle, can be obtained via the 
proper orthogonal decomposition (POD) on $\Gamma=\bigcup _{i=1}^{N_c}\Gamma_i$ of frequency bounded 
solutions, or equivalently, via frequency-limited balanced truncation \cite{gugercin2004survey}. Here we 
implemented an approximate approach of \cite{DKSZ} that is outlined below.

Following \cite{asvadurov2000application}, we will concentrate on the propagative part of the solution for 
the cutoff frequency $\omega_{max}$, that for $t \ge 0$ is given by spectral decomposition 
\be
\label{specdec} 
q^T u = \sum_{\lambda_l \le \omega_{max}^2} 
\frac{\sin{\sqrt{\lambda_l}t}}{\sqrt{\lambda_l}} q^T z_l z_l^T g,
\ee
where $\lambda_l \in \bfR$ and $z_l \in \bfR^N$, $l=1,2,\ldots,N$ are respectively the eigenvalues and the 
eigenvectors of matrix pencil $(\tA,\tB)$. Accurate approximation of the propagative part also gives reasonably 
good approximation of the entire solution. e.g., see \cite{asvadurov2000application}.

%\tcr{
%Consider the frequency-limited Gramian
%\be
%\label{trgr}
%G^{\omega_{max}} = \sum_{\lambda_l\le \omega_{max}^2}v_l v_l^T, \quad v_l=z_l|_{\Gamma},
%\ee 
%where $z_l$ are the eigenvectors of matrix pencil (\tns{A}, \tns{B}). With the help of spectral 
%representation (\ref{specdec}), the Gramian (\ref{trgr}) is simply}
%a product of the spectral projector on the propagative eigenspace and the operator of restriction on 
%$\Gamma_i$. Such a global truncation can be prohibitively expensive though, even if the projector is computed 
%via efficient rational approximants of ${\tns B_i}^{-1}\tns{A}_i$. Moreover, the eigenvectors $z_l$ may be
%supported on the entire $\Gamma$.
	
{\clr The subspace of propagative solution restricted on $\Gamma_{ij}$ can be approximately computed via the frequency-limited Gramian:
\be
\label{trgr}
G^{\omega_{max}}_{ij}=\sum_{\lambda_l\le \omega_{max}^2} v_l v_l^T \in \bfR^{K_{ij}\times K_{ij}}, 
\quad v_l = z_l|_{\Gamma_{ij}},
\ee
here $z_l$ are the eigenvectors of matrix pencil (\tns{A}, \tns{B}).	
We denote by $\tns{S}_{ij}\in \bfR^{M_{ij}\times K_{ij}}$ the matrix of eigenvectors of $G^{\omega_{max}}_{ij}$
corresponding to the eigenvalues above some truncation threshold $\epsilon>0$. 
Let the set $J(i)$ be defined as
\tcr{
$$
J(i) = \{ j \;|\; j \ne i \mbox{ and } \Gamma_{ij}\ne\emptyset \}.
$$
}
We construct $\tns{S}_i\in \bfR^{K_i \times \widetilde K_i}$,
\tcr{$\widetilde K_i=\sum_{j\in J(i)}  M_{ij}$}
as a block diagonal matrix with $\tns{S}_{ij}$, $j \in J(i)$ as the diagonal (non-square) blocks, i.e., 
$$
\tns{S}_i \equiv
    \begin{bmatrix}
        \tns{S}_{ij_1} & 0     &    0    &  \ldots  \\
	0 & \tns{S}_{ij_2}      & 0 &  \ldots  \\\ldots
	  & 0& \tns{S}_{ij_3}  & 0\\
	  &  \ldots      & 0      & \tns{S}_{ij_{jmax}}
    \end{bmatrix},$$
where $J(i) = \{ j_1,\ldots, j_{jmax}\}$.

Assuming fast enough decay of the eigenvalues of $G^{\omega_{max}}_{ij}$, it can be shown that 
$$
\| \tns{S}_i \widetilde{\tF}_i(\omega^2) \tns{S}_i^T -{\tF}_i(\omega^2) \|_{\infty} < O(\epsilon).
$$
It is known that for large enough $N_i$ the Gramian truncation procedure exhibits spectral convergence.

Direct computation  via spectral decomposition (\ref{trgr}) is prohibitively expensive for sufficiently large   problems. However, $G^{\omega_{max}}_{ij}$ can be equivalently rewritten as composition of spectral projection and prolongation operators
\be\label {trgrstep}
 \left( \tns{P}_i \tP^{\Gamma}_i\tns{P}_{ij} \right)^T\eta  \left(\omega_{max}^2\tns I-\tns {B}^{-1}\tns{A}\right)\tns{P}_i \tP^{\Gamma}_i \tns{P}_{ij},
\ee
where $\eta(\lambda) $ is the Heaviside step-function, i.e.,  $\eta(\lambda)=1$ if $\lambda\ge0$ and $0$ otherwise.   We compute spectral projector $\eta  \left(\omega_{max}^2\tns I-\tns {B}^{-1}\tns{A}\right)$  via optimal (Zolotarev)  rational approximations of the sign function, e.g., see   \cite{güttel2015zolotarev,di2016efficient,nakatsukasa2015computing}.
The Zolotarev approximant usually requires up to $20$ linear shifted solves, that can still be expensive with  large  $N$.
To speed up the Zolotarev algorithm we use 
an approximate Gramian ${\widetilde G}^{\omega_{max}'}_{ij}$ obtained for the matrix
 $\widetilde\tB _{ij}^{-1}\widetilde \tA_{ij}$ of a 
smaller dimension %$N_{ij}\ll N$ 
but with the same block corresponding to $\Gamma_{ij}$. The low dimensional operator  $\widetilde\tB _{ij}^{-1}\widetilde \tA_{ij}$ is chosen to get accurate approximation of  the Schur complement of full problem at
$\Gamma_{ij}$.
Complete description  of this approach will appear  in  \cite{DKSZ}. }
\subsection{\tcr{Reduced order model for the projected transfer function}}
\label{sect:rom_compio}

Having the compressed set of inputs and outputs $\tns{S}_i$ in hand, the efficient approximation of 
the projected transfer function 
%\tcb{[$\tP_i^\Gamma$ was missing, please check the rest of the changes]}
\tcr{
$$
\widetilde{\tns{F}}_i(\omega^2) = 
\tns{S}_i^T {\tP_i^\Gamma}^T (\tns{A}_i+\omega^2\tns{B}_i)^{-1} \tP_i^\Gamma \tns{S}_i,
$$}
can be obtained using traditional projection-based model reduction methods. 
%In particular, let $\widetilde{K}_i$ be a number of columns of $\tns{S}_i$ and ${\cal{V}}_m$ be projection subspace.
\tcr{These methods seek the ROM of $\widetilde{\tns{F}}_i(\omega^2)$ by replacing $\tns{A}_i$ and $\tns{B}_i$
with projections on some subspace ${\cal{V}}_m$.} Note that projection subspace ${\cal{V}}_m$ may be different 
for different coarse cells. However, for brevity, we omit the index $i$ of ${\cal{V}}_m$. The rational Krylov subspace (RKS) 
is one of the most popular choices of projection subspaces in model order reduction community, allowing to obtain 
ROMs as multipoint Pad\'e approximants with exponential convergence rates \cite{ruhe}.   

%Rational Krylov subspace 
%\be\label{rks}{\cal{V}}_m=colspan\{((\tns{A}_i+s_1\tns{B}_i)^{-1}\tns{S}_i,\ldots,\prod_{i=1}^m(\tns{A}_i+s_i\tns{B}_i)^{-1}\tns{S}_i\}\ee with properly chosen shifts $s_1, s_2,\ldots, s_m$ has been shown to be rather efficient for approximating transfer function of diffusive problems. For wave problems, though, the computation of resolvents in \ref{rks} requires multiple solutions of Helmholtz equation, and this is rather computationally intensive task by itself (unless we followed rigorous approach to construct $\tns{S}_i$ and already performed eigendecomposition of $\tns{A}_i$) anyway. 
In our numerical examples we stick to block rational Krylov subspace\footnote{See remark~\ref{Sinsubspace} for 
discussion whether to include zero power term $\tns{S}_i$ in the subspace.}
\tcr{
\be
\label{npks}
{\cal{V}}_m = \mbox{colspan}
\{\tP_i^\Gamma \tns{S}_i, (\tns{A}_i+s \tns{B}_i)^{-1} \tP_i^\Gamma \tns{S}_i, \ldots, 
(\tns{A}_i+s\tns{B}_i)^{-m+1} \tP_i^\Gamma \tns{S}_i \} 
\ee}
with some small (in absolute value) negative shift $s$. This RKS can be implemented by applying a block 
Lanczos iteration to the pair \tcr{$\left( (\tns{A}_i+s\tns{B}_i)^{-1}, \tP_i^\Gamma \tns{S}_i \right)$}.

\begin{assumption}\label{ass1} 
\tcr{Here we assume that all $(\tns{A}_i+s\tns{B}_i)^{-k} \tP_i^\Gamma \tns{S}_i$ are linearly independent 
for $k=0,\ldots,m-1$}, i.e. $\dim({\cal{V}}_m) = m \widetilde K_i$. This assumption generally holds for problems 
arising from discretization of uniformly bounded elliptic PDEs provided $m \widetilde K_i \ll N_i$, but 
theoretically may break in the case of elliptic systems with degeneration, e.g., in the mix of solid and liquid for 
the elasticity system. This would require block Lanczos method with deflation, but we did not observe such 
phenomena in practice. 
\end{assumption}

Note that the computation of  ${\cal{V}}_m$ requires multiple linear solves with shifted matrix $\tns{A}_i$ that 
can be quite costly. However, the cost is alleviated by several factors. First, the computation is only done on 
small subdomains. Second, the computations for different subdomains are independent of each other, 
thus they can be performed in parallel \tcr{without any data exchange}. Third, the computation only must be 
done just once before the time stepping for all sources, \tcr{what we refer to as the off-line computation}. 
Also, a precomputed Cholesky factorization can be reused for the repeated linear solves. 
%Note that unlike the ROM wave 
%propagation scheme of \cite{pereyra2008fast}, our computation of ${\cal{V}}_m$ 
%is also independent of the number or position of sources and receivers. 

\begin{remark} Projection on subspace (\ref{npks}) is easy to implement and it provides
$\widetilde{\tns{F}}_i^m$ as Pad\'e approximants of $\widetilde{\tns{F}}_i$ as function of $\omega^2$ at 
frequency $s$ with exponential convergence for 
\tcr{$\omega^2\in \bfC\setminus\lambda[\tB_i^{-1} \tA_i]$}. %\in \bfR^{\widetilde{K}_i\times \widetilde{K}_i}
However, it may not be optimal in terms of the number of  degrees of freedom  per wavelength. Other model
reduction techniques such as time- and/or frequency-limited balanced truncation \cite{gugercin2004survey} 
may be more appropriate for these purposes, however their implementation can be more expensive.
\end{remark} 

Let \tcr{$\tns{V}_m \in \bfR^{N_i \times m \widetilde{K}_i}$} 
%\tcb{[hereafter check the dimensions of all matrices]}
be an orthonormal basis for ${\cal{V}}_m$. 
\tcr{Reduced-order model of the projected transfer function can be obtained as}
\be
\label{tfprj}
\widetilde{\tns{F}}^m_i(\omega^2) = 
(\tns{S}^m_i)^T(\tns{A}^m_i+\omega^2\tns{B}^m_i)^{-1}\tns{S}^m_i 
\in \bfR^{\widetilde{K}_i\times \widetilde{K}_i}
\ee
\tcr{where $\tns{A}^m_i = \tns{V}_m^T \tns{A}_i \tns{V}_m \in \bfR^{m \widetilde{K}_i \times m \widetilde{K}_i}$ 
and $\tns{B}^m_i = \tns{V}_m^T \tns{B}_i \tns{V}_m \in \bfR^{m \widetilde{K}_i \times m \widetilde{K}_i}$ 
is the projected matrix pencil and 
$\tns{S}^m_i = \tns{V}_m^T \tP_i^\Gamma \tns{S}_i \in \bfR^{m \widetilde{K}_i \times \widetilde{K}_i}$} 
is the projected input-output.

\begin{lemma}
\label{lemStil}
Matrix valued function $\widetilde{\tns{F}}^m_i(s)$ is the $(0,\infty)$-Stieltjes function of order $\widetilde{K}_i$. 
\end{lemma}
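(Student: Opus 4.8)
The plan is to reduce the statement to the Stieltjes property of the exact transfer function $\tns{F}_i$, which the excerpt has already established. The key observation is that $\widetilde{\tns{F}}^m_i(\omega^2)=(\tns{S}^m_i)^T(\tns{A}^m_i+\omega^2\tns{B}^m_i)^{-1}\tns{S}^m_i$ has exactly the same algebraic form as $\tns{F}_i(\omega^2)={\tns{P}^\Gamma_i}^T(\tns{A}_i+\omega^2\tns{B}_i)^{-1}\tns{P}^\Gamma_i$, with the full pencil $(\tns{A}_i,\tns{B}_i)$ replaced by the Galerkin-compressed pencil $(\tns{A}^m_i,\tns{B}^m_i)=(\tns{V}_m^T\tns{A}_i\tns{V}_m,\,\tns{V}_m^T\tns{B}_i\tns{V}_m)$ and $\tns{P}^\Gamma_i$ replaced by $\tns{S}^m_i$. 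Hence it suffices to show that the compressed pencil inherits the definiteness that made $\tns{F}_i$ Stieltjes, and then to repeat, essentially verbatim, the spectral argument used for $\tns{F}_i$.

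First I would record that $\tns{V}_m$ has full column rank: by Assumption~\ref{ass1} we have $\dim\mathcal{V}_m=m\widetilde{K}_i$, and $\tns{V}_m$ is an orthonormal basis of $\mathcal{V}_m$. Since congruence by a full-column-rank matrix preserves strict definiteness, for every nonzero $x$ we get $x^T\tns{B}^m_i x=(\tns{V}_m x)^T\tns{B}_i(\tns{V}_m x)>0$ and $x^T\tns{A}^m_i x=(\tns{V}_m x)^T\tns{A}_i(\tns{V}_m x)<0$, because $\tns{V}_m x\ne 0$ together with $\tns{B}_i\succ 0$ and the strict negativity of $\tns{A}_i$ that yields $\lambda^i_l\in\bfR_-$ in the spectral representation of $\tns{F}_i$. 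Thus $\tns{B}^m_i\succ 0$ and $\tns{A}^m_i\prec 0$. Using $\tns{B}^m_i\succ 0$ I would simultaneously diagonalize the compressed pencil: there is a nonsingular $W$ with $W^T\tns{B}^m_i W=\tns{I}$ and $W^T\tns{A}^m_i W=\Theta=\mathrm{diag}(\theta_1,\ldots,\theta_{m\widetilde{K}_i})$, where every $\theta_j<0$ by the definiteness of $\tns{A}^m_i$. Substituting $(\tns{A}^m_i+\omega^2\tns{B}^m_i)^{-1}=W(\Theta+\omega^2\tns{I})^{-1}W^T$ and writing $r_j^T$ for the rows of $R=W^T\tns{S}^m_i$ gives the pole--residue expansion $\widetilde{\tns{F}}^m_i(\omega^2)=\sum_j r_j r_j^T/(\theta_j+\omega^2)$: a sum of real symmetric positive-semidefinite rank-one residues over the real poles $\omega^2=-\theta_j\in(0,\infty)$. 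This is identical in form to $\tns{F}_i(\omega^2)=\sum_l V^i_l(V^i_l)^T/(\lambda^i_l+\omega^2)$, so the three conditions of Definition~\ref{stieltjes} follow term by term exactly as for $\tns{F}_i$: holomorphy off $(0,\infty)$ because the only singularities are the poles at $-\theta_j>0$; and the imaginary-part and negative-axis conditions because each $r_jr_j^T\succeq 0$ while $\Im(\theta_j+z)^{-1}$ and $(\theta_j+z)^{-1}$ for $z<0$ carry the required signs. The matrix size $\widetilde K_i$ gives the order.

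The main obstacle --- really the only place where anything can fail --- is the definiteness step: I must guarantee that the compression does not destroy the one-sided spectral location of the pencil, that is, that $\tns{A}^m_i$ stays strictly negative definite (so that no pole migrates to $\omega^2=0$, where holomorphy is required on the domain $\bfC\setminus(0,\infty)$) and that $\tns{B}^m_i$ stays positive definite. Both facts hinge entirely on $\tns{V}_m$ having full column rank, which is precisely Assumption~\ref{ass1}; once that is secured the congruence argument is immediate, and the remainder of the proof is a transcription of the verification already carried out for $\tns{F}_i$.
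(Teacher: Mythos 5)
Your proposal is correct and follows essentially the same route as the paper: both establish that the projected pencil $(\tns{A}^m_i,\tns{B}^m_i)$ inherits the sign-definiteness of $(\tns{A}_i,\tns{B}_i)$, diagonalize it to obtain the partial-fraction representation $\sum_l v_l v_l^T/(\theta_l+\omega^2)$ with $\theta_l\le 0$ and positive-semidefinite rank-one residues, and then check the three conditions of Definition~\ref{stieltjes} term by term. The only difference is that you spell out the full-column-rank/congruence argument (and the need for strictness of the pole locations) that the paper asserts in one line.
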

\begin{proof}
Projected matrices $\tns{A}^m_i$ and $\tns{B}^m_i$ are  symmetric and respectively non-positive and 
positive definite, so their pencil has eigenpairs
$\theta_l^i \in \bfR_-$,  $y_l^i\in \bfR^{m \widetilde{K}_i}$, \tcr{$l =1,\ldots, m \widetilde{K}_i$}.
From the spectral decomposition we obtain a partial fraction representation
\be
\label{partfr} 
\widetilde{\tns{F}}^m_i(\omega^2) = 
\sum_{l=1}^{m\widetilde K_i}\frac{v_lv_l^T}{\theta_l^i+\omega^2}, 
\quad v_l={P_i^{\Gamma}}^T y_l^i\in \bfR^{\widetilde K_i}.
\ee
%\tcb{[ Also, this looks like order $[m-1,m]$ approximant in $s$
%or order $[2m-2,2m]$ in $\omega$, but the projection subspace ${\cal{V}}_m$ corresponds to approximant of order 
%$[m,m]$ in $s$ ($[2m,2m]$ in $\omega$) if I am not mistaken. Please check carefully]} 
Obviously, representation (\ref{partfr}) satisfies all 3 conditions of Definition~\ref{stieltjes}.
\end{proof}

\begin{assumption}\label{S-fractionrep} 
It is known that the McMillan degree of the reduced order model $\widetilde{\tns{F}}^m_i(s)$ 
(that can also be equivalently defined as twice the minimal number of terms in representation (\ref{partfr}), 
\cite{Callier1991}) is bounded by twice the dimension of the projection space 
(in the second order formulation, i.e. by $2m \widetilde{K}_i$). We assume that the actual 
McMillan degree of $\widetilde{\tns{F}}^m_i(s)$ is equal to this bound, which is what we observed in our 
experiments given in section~\ref{sect:numex} for sufficiently large $N_i$.
\end{assumption}
 
%=====================================================================
%\section{Sparse equivalent representation of ROMs}
\section{\tcr{Equivalent sparse representations of ROMs}}
\label{sparserom}

\tcr{
The ROM (\ref{tfprj}) in general involves the projected matrix pencil 
$\tns{A}^m_i, \tns{B}^m_i \in \bfR^{m \widetilde{K}_i \times m \widetilde{K}_i}$ with dense matrices. This leads 
to high computational costs at the on-line stage when the time stepping is performed with such operators. To reduce
the computational cost of the on-line stage we need to compute the equivalent minimal realization of the ROM via a 
linear time-invariant dynamic system. In this section we construct a sparse realization of such system, which mimics 
the structure of a three-point second order finite-difference scheme. }

The approach of this section is a direct generalization of the so-called finite-difference quadrature rules a.k.a. 
spectrally matched or optimal grids \cite{druskin2000gaussian,asvadurov2000application} from scalar to matrix 
transfer functions. For brevity, in this section we omit index $i$ assuming that we currently consider a single 
coarse cell $\Omega_i$ only.

%Here we will give purely constructive description, and reserve rigorous analysis for further work.
%By construction, $\widetilde{\tns{F}}^m (\omega^2)$ is the Stieltjes function of $\omega^2$.
%Provided $\widetilde{\tns{F}}^m (\omega^2)$  is obtained in the algorithm described in the previous section 
%with the full rank RKS (assumption~\ref{ass1}),

The core of our approach is an equivalent representation of $\widetilde{\tns{F}}^m(\omega^2)$ via matrix Stieltjes 
continuous fraction (S-fraction) form \cite{bolotnikov1999operator} as
%\tcb{[is this definition consistent with the projection subspace ${\cal{V}}_n$? I think that subspace gives an approximant 
%of order $[2m,2m]$ in $\omega$, but this S-fraction gives order $[2m-2,2m]$ if I am not mistaken. Please check]}
\be
\label{S-fraction}  
\widetilde{\tns{F}}^m(\omega^2) = 
\frac{1}{ -\frac{\omega^2}{\widehat{\bG}^1} + \dfrac{1}{\frac{1}{{\bG}^1} +
\dfrac{1}{\ddots \; +\dfrac{1}{-\frac{\omega^2}{\widehat{\bG}^m} + \bG^m}}}},
\ee
where $\widehat \bG^k, \bG^k \in \bfR^{\widetilde K\times\widetilde K}$ are symmetric positive-definite matrices
\tcr{and the division is understood as matrix inversion.} 
As follows from results of \cite{dyukarev2004indeterminacy}, Assumption~\ref{S-fractionrep} 
guarantees the existence and uniqueness of the S-fraction representation (\ref{S-fraction}) of Stieltjes 
$\widetilde{\tns{F}}^m(\omega^2)$ obtained (as in the previous section) via the matrix Pad\'e approximant.

A direct recursive algorithm transforming $\widetilde{\tns{F}}^m$ from the partial fraction form (\ref{partfr})
to the S-fraction form (\ref{S-fraction}) is Algorithm 7.2 in \cite{druskin2015direct}. It also can be viewed as a 
constructive existence proof, that if such a representation exists, it is unique and automatically produces positive
definite $\widehat{\bG}^k$, $\bG^k$, $k = 1,2,\ldots,m$ for Stieltjes $\widetilde{\tns{F}}^m$. Another, more 
complicated but also more computationally efficient algorithm via the block-Lanczos method is given in 
Appendix~\ref{appC}.
 
  The novelty of our approach is in following original Krein's representation of the scalar S-fraction via a string of point 
masses and weightless springs, a so-called Stieltjes string \cite{kac1974spectral}, to represent 
$\widetilde{\tns{F}}^m(\omega^2)$ equivalently via a transfer function of a 
``three-point finite-difference scheme'' \tcr{with matrix coefficients}
\bea
\bG^1 (\tns{U}^2 - \tns{U}^1)  + 
\omega^2 (\bhG^1)^{-1} \tns{U}^1 & = \tns{I}, 
\label{eqn:GammaU1st}\\
\bG^k (\tns{U}^{k+1} - \tns{U}^k) -
\bG^{k-1} (\tns{U}^{k} - \tns{U}^{k-1})  +
\omega^2 (\bhG^k)^{-1} \tns{U}^k & = \tns{0},
\label{eqn:GammaUrem}
\eea
by introducing fictitious (full rank) matrix variables $\tns{U}^k \in \bfC^{\widetilde{K} \times \widetilde{K}}$, 
$k = 1,2,\ldots, m+1$, with $\tns{U}^{m+1} = 0$, so that
\be 
\label{eqn:tfgamma}
\widetilde{\tns{F}}^m(\omega^2) = \tns{U}^1.
\ee
\begin{remark}
\label{sfint}
In the case $\widetilde K=1$, (\ref{S-fraction}) becomes a scalar S-fraction and 
(\ref{eqn:GammaU1st})--(\ref{eqn:GammaUrem}) 
becomes the regular second-order finite-difference scheme on the optimal staggered grid with primary and 
dual steps $1 / \bG^j$ and $1 / \bhG^j$  respectively.

For multidimensional problems ($\widetilde K>1$), \tcr{the columns of $\tns{U}^1$} 
%\tcb{[is this what is meant? cause $U^1$ does not appear until next section]}
give the projections of the 
solutions on the subdomain boundary, and \tcr{the columns of $\tns{U}^k$ for $k>1$ have the same dimension 
$\widetilde{K}$}, %as $\tns{U}^1$, 
i.e., they can be viewed as projections of solutions on some virtual concentric layers inside $\Omega_i$ and  
(\ref{eqn:GammaU1st}--\ref{eqn:GammaUrem}) can be interpreted as a three-point second order discretization 
in the subdomain interior in the direction normal to these layers. A schematic representation of the of the stencil 
inside $\Omega_i$ is shown in Figure~\ref{fig:romsten}.
The S-fraction representation is a known tool in the homogenization theory (e.g., see \cite{markel2012homogenization}). 
Indeed, if subdomain has dimensions much less compared to the wavelength then conventional non-dispersive 
isotropic or anisotropic effective medium can be constructed. Otherwise, the effective medium has to be 
frequency-dependent (dispersive). In this contest (\ref{S-fraction}) can be viewed as an effective dynamic impedance 
of the subdomain converted to an equivalent network approximation via (\ref{eqn:GammaU1st})-(\ref{eqn:tfgamma}). 
\end{remark}

\begin{figure}[htb]
\centering
\includegraphics[width=0.45\columnwidth]{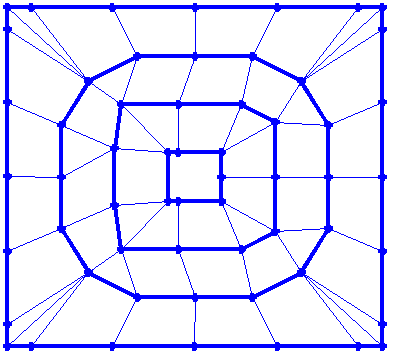}
\caption{Schematic representation of the stencil for the s-fraction ROM inside single subdomain. Each layer of unknowns 
is connected with its neighbors only. Here we only outline  just some  possible nodes and connections: 
Connection between neighbors is dense in general, i.e.,  every unknown is one layer is connected with all unknowns inside 
that layer as well as all unknowns in the neighboring layers;  The number  of the unknowns at every layer is the same as at 
the coarse cell boundary, i.e.,  $\widetilde K$, see remark~\ref{sfint}.}
%\tcb{[this picture is confusing because the number of nodes in the layers decreases as we go inside, which is in contradiction
%with what remark~\ref{sfint} says that the dimension is $\widetilde{K}$ at all layers. Could you please redo this?]}
\label{fig:romsten}
\end{figure}

%=====================================================================
\section{Boundary conjugation}
\label{sect:conj}

\tcr{
Construction of the ROM $ \widetilde{\tF}_i^m$ and its S-fraction representation (\ref{S-fraction}) concludes
the off-line stage. At the on-line stage the solutions at neighboring subdomains need to be conjugated via the
reduced order transfer functions on the shared boundaries. This leads to data exchange between the subdomains. 
In this section we derive the formulas for solution conjugation which dictate the data to be exchanged.}
%\tcb{[I didn't check all the formulas in this section, please go over them carefully in case I missed something]}

Replacing  the true full-scale transfer function $\tns{F}_i(\omega^2)$ in (\ref{BEINC}) with ROM (\ref{eqn:tfgamma}) 
we obtain
\tcr{\be 
{\widetilde\tP_{ij} }^T \left( \widetilde{\tF}_i^m(\omega^2) \right)^{-1} U^1_i +
{\widetilde\tP_{ji} }^T \left( \widetilde{\tF}_j^m(\omega^2) \right)^{-1} U^1_j = 
\widetilde{g}_{ij} 
\label{BEINCr}, 
\ee}
where 
$$
\widetilde{g}_{ij}=\tns{S}_{ij}^T g|_{\Gamma_{ij}},
$$
and $\widetilde{\tns{P}}^T_{ij} \in \bfR^{\widetilde K_{ij} \times \widetilde{K}_i}$ is the restriction matrix from 
the set of reduced inputs-outputs $\tns{S}_{i}$ to the set that corresponds to $\tns{S}_{ij}$.

Vector variable $U^1_i\in\bfR^{\widetilde K_i}$ (a function of $\omega$) has the meaning of the projected approximate 
solution of (\ref{BEINC}), i.e. 
$$
u |_{\Gamma_{i}}\approx \tns{S}_{i} U^1_i.
$$
Hence, the solution at receiver $q$ can be approximated by 
\be
\label{solatrec} 
q^T u \approx \sum_{i,j} \widetilde{q}_{ij} \widetilde{u}_{ij},
\ee 
where 
$$
\widetilde u_{ij} = \widetilde{\tns{P}}_{ij}^T U_i^1,
$$ 
and 
$$
\widetilde q_{ij} = \tns{S}_{ij}^T q|_{\Gamma_{ij}}.
$$
We impose conjugation conditions 
\be
\label{contred}
\widetilde u_{ji}=\widetilde u_{ij}
\ee 
for every two adjacent subdomains $\Omega_i$ and $\Omega_j$. 
By construction $\tns{S}_{ij} = \tns{S}_{ji}$, so (\ref{contred}) leads to conjugation for the approximate 
solutions at $\Gamma_{ij}$. From (\ref{eqn:GammaU1st}) and (\ref{eqn:tfgamma}) 
\tcr{for subdomain $\Omega_i$ we obtain
$$
\left( \bG^1_i (\tns{U}^2_i - \tns{U}^1_i) \right) + \omega^2 (\bhG^1_i)^{-1} \tns{U}^1_i  = 
\left( \widetilde{\tns{F}}^m_i (\omega^2) \right)^{-1} \tns{U}^1_i.
$$
The same holds for $\Omega_j$}.
As it was mentioned above, matrices $\tns{U}^k_i$ are of full rank, so 
for $\Omega_i$ we can write
\tcr{$$ 
\left( \bG^1_i ({U}_i^2 - {U}_i^1) \right) + \omega^2 \left({\bhG^1}_i\right)^{-1}{U}_i^1  = 
\left(\widetilde{\tns{F}}_i^m(\omega^2)\right)^{-1}{U}^1_i,
$$}
where ${U}_i^2\in \bfR^{\widetilde K_i}$. The same relation holds for the $\Omega_j$.
Then equation (\ref{BEINCr}) can be rewritten as 
\be
\begin{split}
\widetilde{\tns{P}}_{ij}^T \left( \bG^1_i (U^2_i - U^1_i) \right) + 
\widetilde{\tns{P}}_{ji}^T \left( \bG^1_j (U^2_j - U^1_j) \right)  & + \\
\omega^2 \left( \widetilde{\tns{P}}_{ij}^T (\bhG_i^1)^{-1} U^1_{i} + 
                          \widetilde{\tns{P}}_{ji}^T (\bhG_j^1)^{-1} U^1_{j} \right) & = 
\widetilde{g}_{ij}. 
\end{split}
\label{eqn:firsttstep_conj} 
\ee
  
The system  will be completed by adding to conjugation conditions (\ref{contred})--(\ref{eqn:firsttstep_conj})
and internal equations from (\ref{eqn:GammaUrem}) for all $\Omega_i$:
\be
{\bG}_i^k ({U_i}^{k+1} - {U_i}^k) -
{\bG}_i^{k-1} ({U_i}^{k} - {U_i}^{k-1})  +
\omega^2 (\bhG_i^k)^{-1} {U_i}^k  = 0, 
\quad U_i^{m+1}=0,
\label{eqn:GammaUremi}
\ee
\tcr{where $k=2,3,\ldots,m$}.

Replacing $\omega^2$ by $-\frac{d^2}{dt^2}$ we transform (\ref{eqn:firsttstep_conj})--(\ref{eqn:GammaUremi}) 
to the semi-discrete time-domain system (using the same notation $U_i^k$ for the time-domain solution)
\bea
\bG^k_i (U^{k+1}_i - U^k_i) -
\bG^{k-1}_i (U^{k}_i - U^{k-1}_i) - \frac{d^2}{dt^2}(\bhG^k_i)^{-1} U^k_i & = &0, \nonumber\\ 
\quad U_i^{m+1}=0, \quad k=2,3,\ldots,&m,&
\label{eqn:intrtstep_td}
\eea
\be
\begin{split}
\widetilde{\tns{P}}_{ij}^T \left( \bG^1_i (U^2_i - U^1_i) \right) + 
\widetilde{\tns{P}}_{ji}^T \left( \bG^1_j (U^2_j - U^1_j) \right) & - \\ 
\frac{d^2}{dt^2} \left( \widetilde{\tns{P}}_{ij}^T (\bhG_i^1)^{-1} U^1_{i} + 
                                       \widetilde{\tns{P}}_{ji}^T (\bhG_j^1)^{-1} U^1_{j} \right) & =
\delta(t) \widetilde g_{ij},  
\end{split}
\label{eqn:firsttstep_conj_td}
\ee
coupled with conjugation condition (\ref{contred}) and initial conditions
\be
\label{initial}
U_i^1|_{t<0}=0, 
\ee
for $i=1,\ldots,N_c$ and for all $j \ne i$ such that $\Gamma_{ij} \ne \emptyset$.

\begin{example}
In 1D Example \ref{1d_example}, the discretization 
(\ref{eqn:intrtstep_td}), (\ref{eqn:firsttstep_conj_td}), (\ref{contred}) 
corresponds to the conventional finite-difference scheme on the spectrally-matched grid
\tcr{$$
\frac{1}{\widehat{h}^k} \left( \frac{U^{k+1} - U^k}{h^k} - \frac{U^{k} - U^{k-1}}{h^{k-1}}\right) + 
\frac{d^2}{dt^2} U^k = g_0 \delta_{k 0}, \quad k = -n+1, \ldots, n-1,
$$}
with $U^{-n} = U^{n} = 0$. Here $\delta$ is Kronecker's delta, 
\tcr{split ROM solution $U$ is indexed as 
$U_1^{-k} = U^{k-1}$, $k = -n, \ldots, -1$ and $U_2^{k} = U^{k-1}$, $k = 1,\ldots,n$, 
and the grid steps $\{h^k\}^{n-1}_{k = -n}$, $\{\widehat{h}^k \}^{n-1}_{k = -n+1}$}
are defined as 
%\tcb{[inverses were not there, typo?]}
%{\color{ForestGreen} don't see anything wrong hee, may be I don't understand the question}
\tcr{\begin{align*}
h^k = & \left( \bG_1^{-k} \right)^{-1}, \quad k = -n, \ldots, -1, \\
h^k = & \left( \bG_2^{k+1} \right)^{-1}, \quad k = 0, \ldots, n-1, \\
\widehat{h}^k = & \left( \bhG_1^{-k+1} \right)^{-1}, \quad k = -n+1, \ldots, -1, \\
\widehat{h}^k = & \left( \bhG_2^{k+1} \right)^{-1}, \quad k = 1, \ldots, n-1, \\
\widehat{h}^0 = & \left( \bhG_1^{1} + \bhG_2^{1} \right)^{-1}.
\end{align*}}
In Figure~\ref{fig:1d_grd} we show an example of a 1D spectrally-matched grid. In particular, we used
ROMs of size 7 and 11 for the coarse grid cells in $\Omega_1 = \{k \;|\; -n \le k \le 0 \}$ and 
$\Omega_2 = \{ k \;|\; 0 \le k \le n \}$, respectively. \tcr{We refer to $\{h^k\}^{n-1}_{k=-n}$ as the primary grid 
nodes and $\{\widehat{h}^k\}^{n-1}_{k=-n+1}$ as the dual grid nodes. Solution $U^k$ is defined at the 
primary grid, while the derivative $(U^{k+1}-U^k)/h^k$ is defined at the dual grid nodes.}
\end{example}

\begin{figure}[htb]
\centering
\includegraphics[width=\columnwidth]{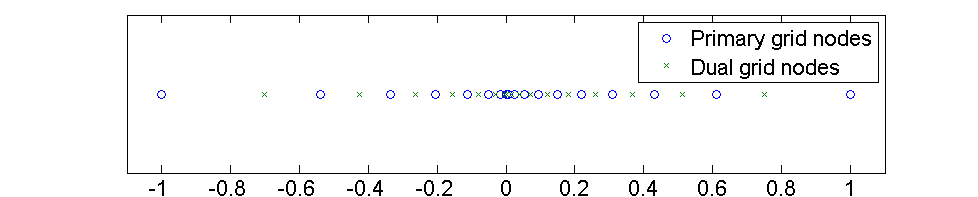}
\caption{A 1D spectrally-matched grid. \tcr{Blue circles are the primary (solution) grid nodes, 
green crosses are the dual (derivative) grid nodes.}}
\label{fig:1d_grd}
\end{figure}

\begin{remark} 
\label{Sinsubspace} 
Note that in general equations (\ref{eqn:firsttstep_conj_td}) define a
time-stepping scheme for the boundary solutions $U^1$ with a global mass matrix. The exception here 
is when matrices $\bhG^1_i$ are block diagonal such that their non-zero elements are 
${\widetilde{\tns{P}}_{ij}}^T \bhG^1_i \widetilde{\tns{P}}_{ij}$, $j \in {\cal{J}}(i)$ only.
This can be achieved by adding $\tns{P}_i^\Gamma \tns{S}_i$ 
to the projection subspace. Then for the solution at the shared boundary 
$\widetilde u_{ij} = \widetilde u_{ji} = \widetilde{\tns{P}}_{ij}U^1_{i} = \widetilde{\tns{P}}_{ji}U^1_{j}$ 
equation (\ref{eqn:firsttstep_conj_td}) is replaced by
\begin{equation}
\begin{split}
\frac{d^2 \widetilde u_{ij}}{dt^2} = &
\left( 
\left({\widetilde{\tns{P}}_{ij}}^T \bhG^1_i \widetilde{\tns{P}}_{ij} \right)^{-1} + 
\left({\widetilde{\tns{P}}_{ji}}^T \bhG^1_j \widetilde{\tns{P}}_{ji} \right)^{-1} \right)^{-1} \times \\
& \times \left(
\widetilde{\tns{P}}_{ji} \bG^1_j (U^2_j - U^1_j)+ 
\widetilde{\tns{P}}_{ij} \bG^1_i (U^2_i - U^1_i)
\right) + \delta(t) \widetilde{g}_{ij},
\end{split}
\label{eqn:bdrytstep}
\end{equation}
which only requires communication between the adjacent subdomains.
{\clr Mass-matrix diagonalization by adding new elements to the projection subspace cannot increase the approximation error, in fact, it  slightly improves accuracy.    However, it may tighten the CFL stability limitation for the explicit 
time-stepping, which anyway will be still looser compared to the original problem (\ref{eqn:wave_fll}), see remark~\ref{MFEM}.}
Any standard time stepping scheme can be used for (\ref{eqn:intrtstep_td}), (\ref{eqn:bdrytstep})  
including Runge-Kutta, etc. The expressions on the right hand side of (\ref{eqn:intrtstep_td}), (\ref{eqn:bdrytstep}) 
are always evaluated at the current time step.
\end{remark}

\begin{remark} 
Every equation (\ref{contred}), (\ref{eqn:intrtstep_td}), (\ref{eqn:bdrytstep}) contains at most three vector unknowns 
$U_i^j$ and $\widetilde u_{ij}$, that can be interpreted as boundary restrictions of the solution, see Remark~\ref{sfint}. 
Schematically the stencil of the obtained operator is shown in Fig. \ref{fig:romstenfll}. So, in general the structure 
of (\ref{contred}), (\ref{eqn:intrtstep_td}), (\ref{eqn:bdrytstep}) mimics the matrix structure of the second-order 
finite-volume discretization schemes. 
\end{remark}

\begin{figure}[htb]
\centering
\includegraphics[width=0.9\columnwidth]{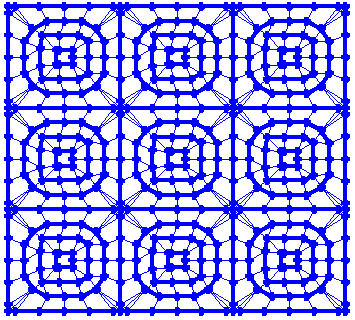}
\caption{The stencil of the S-fraction multiscale ROM of $\tA$ with $3\times 3$ coarse cells. 
Adjacent subdomains communicate through the boundary unknowns only, and interior unknowns are arranged in {\clr hidden} layers. This is schematic representation similar to the one in the Figure~\ref{fig:romsten}. See the caption thereof. }
\label{fig:romstenfll}
\end{figure}

%=====================================================================
\section{Method summary}
\label{sect:methsum}
We summarize below our method as an algorithm that is well suited for 
parallel HPC platforms.

\begin{alg}[Multiscale S-fraction ROM method]~\label{alg1red}~\\
\textbf{Stage 1 (off-line)}. In full parallel mode for each subdomain $\Omega_i$ do the following:
\begin{enumerate}
\item[(1.1)] Compute the projection subspace basis $\tns{V}_i$ and the \tcr{the projected operators 
$\tns{A}^m_i$, $\tns{B}^m_i$, $\tns{S}^m_i$ that define the ROM (\ref{tfprj})}. 
\item[(1.2)] Apply the block Lanczos algorithm with an inner product $\tns{B}^m_i$ 
to transform \tcr{$(\tns{A}^m_i, \tns{S}^m_i)$} to a block tridiagonal form
$(\tns{T}_i, \tns{R}_i)$.
\item[(1.3)] Obtain the S-fraction coefficients $\bG^k_i$, $\bhG^k_i$ 
from $(\tns{T}_i, \tns{R}_i)$ using relations (\ref{eqn:GammaTrans})
\footnote{Steps (1.2) and (1.3) can be combined using a particular form of 
block Lanczos method.}.
\item[(1.4)] Obtain $\widetilde g_{ij}$ and $\widetilde q_{ij}$ by projecting 
$g|_{\Gamma_{ij}}$ and $q|_{\Gamma_{ij}}$ onto $\tns{S}_{ij}$, respectively.
\end{enumerate}
\textbf{Stage 2 (on-line).} Starting with initial conditions 
$U^k_i|_{t=0} = \partial_t U^k_i|_{t=0} = 0$ for each time step do the following:
\begin{enumerate}
\item[(2.1)] Exchange $\widetilde{\tns{P}}_{ij}\bG^1_i (U^2_i - U^1_i)$ 
and $\widetilde{\tns{P}}_{ji}\bG^1_j (U^2_j - U^1_j)$ between the subdomains 
sharing $\Gamma_{ij}$.
\item[(2.2)] While waiting for the data exchange, compute in parallel 
for each $\Omega_i$ the updates to the interior solutions 
\tcr{$U^2_i,\ldots,U^m_i$} using (\ref{eqn:intrtstep_td}).
\item[(2.3)] Once the data exchange is complete, compute in parallel 
for each $\Omega_i$ the updates to the boundary solutions 
$U^1_i$ using (\ref{eqn:bdrytstep}). 
\tcr{Use (\ref{solatrec}) to compute the solution at receivers}.
\end{enumerate}
\end{alg}

The main computational cost of the online part in vector to matrix multiplications in $(2.2-2.3)$ is $O( \widetilde K_i^2m_i)$ 
per coarse cell per step, i.e., it grows as power $5/3$ of the total degrees of freedom (i.e., McMillan degree) in the coarse cell 
(assuming consistent discretization with $\widetilde K_i=O(m_i^2)$). Note that due to block-tridiagonal structure of 
(\ref{eqn:intrtstep_td}) step (2.2) can be efficiently palatalized further reducing the cost. Also, the order of steps 
(2.1) and (2.2) allows for what is known in computer science literature as hiding the communication latency behind the 
computations. We should point out, that the communication cost $O( \widetilde K_i)$ per step per 
cell is very low. We only exchange \emph{projected} shared boundary degrees of freedom between the adjacent subdomains 
as if we had a second order scheme, even though in our case this number is much smaller.   Such a small communication 
cost is possible because the ROMs approximate the NtD map to high (spectral) accuracy, even though 
(\ref{eqn:GammaU1st})--(\ref{eqn:GammaUrem}) resemble a three-point difference scheme.

The number of boundary basis functions $\widetilde K_i$ grows with the number of the minimal wavelengths $L$ in the 
coarse cells. The dimensionality reduction factor $\frac{N_i}{\widetilde K_i m_i}$ also grows with $L$.  So the optimal $L$ 
will be the maximum of two values: (i) the breakeven point between $O( \widetilde K_i^2m_i)$ computational and
 $O(\widetilde K_i)$ communication cost;  (ii) the optimal point of total computational cost in the entire computational 
 domain, that can be obviously estimated as $O\left( \frac{\widetilde K_i^2m_i}{L^3}\right)$ for the 3D 
 (plus one dimension of time) wave problems. The reduction is limited by the Nyquist sampling rate (two degrees of 
 freedom per wavelength), so there is no significant benefits to further increase $L$ when the ROM approaches that limit.
\begin{remark} \label{rem:solcorr}
Recall (see Figure~\ref{fig:nocornerset}) that we consider graphs with the corner nodes removed. 
We assumed that such a modification adds an insignificant error, however this error can be corrected by equating values 
$\{ \widetilde{u}_l \}_{l \in{\cal{H}}(k)}$ at the hanging nodes corresponding to the corner node $k\in\Gamma^\cap$.
The cost of the correction algorithm  (described in the Appendix~\ref{rem:solcorrap}) is insignificant compared to 
Algorithm~\ref{alg1red} in both computation and communication.
\end{remark}

%=====================================================================
\section {Stieltjesness  of the multiscale reduced order transfer function}
\label{Stieltjesness}

Let   $\tns{P}_\Gamma$ be the prolongation operator from domain $\Omega$ to the partition skeleton 
$\Gamma$ (i.e.,  $\tns{P}_\Gamma^T$ is the projection from $\Omega$ to $\Gamma$).
Assuming the ``no corners'' condition (\ref{nocorners}),  we define the skeleton transfer function as 
$$
\tns{F}(\omega^2) = \tns{P}_\Gamma^T
\left( \tns {A} + \omega^2 \tns{B} \right)^{-1}
\tns{P}_\Gamma^T \in \bfC^{K \times K},
$$ 
where $K$ is the number of nodes in the partition skeleton $\Gamma$, $K = \sum_{i,j} K_{ij}$. 
It maps from inputs to outputs satisfying condition (\ref{gsupport}), i.e.,
$$
\tns{F}(\omega^2) g|_\Gamma = u_\Gamma.
$$
Following the same reasoning as for $\tns{F}_i$ outlined in Remark~\ref{stieltjes}, 
$\tns{F}$ is the Stieltjes matrix valued function of $\omega^2$,
i.e. it can be represented as 
$$
\tns{F}(\omega^2)=\sum_{l=1}^N \frac{v_l v_l^T}{\lambda_l + \omega^2},
$$
were  $v_l = \tns{P}_\Gamma^T z_l \in \bfR^K$. It is the Stieltjes matrix-valued function of order  
$K$ according to Definition~\ref{stieltjes}.

Our multiscale reduced order model in the frequency domain can be viewed as an  approximation to 
$\tns{F}(\omega^2)$ defined as follows (assuming steps 1-3 are performed for all adjacent 
\tcr{$\Omega_i$, $\Omega_j$}).

\begin{alg}\label{alg2red}
\begin{enumerate}
\item Given input $g|_\Gamma$ compute $\widetilde{g}_{ij} = \tns{S}_{ij}^T g|_{\Gamma_{ij}}$.
\item Solve (\ref{eqn:firsttstep_conj}), (\ref{eqn:GammaUremi}) and (\ref{contred}) in terms of $\widetilde u_{ij}$.
\item Compute $u|_{\Gamma_{ij}} \approx \tns{S}_{ij} \widetilde u_{ij}$.
\end{enumerate}
\end{alg}
The following results show that the multiscale reduced order model given by Algorirthm~\ref{alg2red} is uniquely 
defined under Assumption~\ref{S-fractionrep} for Laplace frequencies satisfying conditions (\ref{noninspectr}) 
and its time-domain realization in Algorithm~\ref{alg1red} is stable and conservative.

Let $\widetilde K= \sum_{i,j} \widetilde K_{ij}$, $\widetilde{g} \in \bfR^{\widetilde{K}}$ and 
$\widetilde{u} \in \bfR^{\widetilde{K}}$ are the direct sums of respectively $\widetilde g_{ij}$ and 
$\widetilde{u}_{ij}$ for all adjacent \tcr{$\Omega_i$ and $\Omega_j$}. We define 
\tcr{$\widetilde{\tns{F}}^m(\lambda) : \bfC \to \bfC^{\widetilde{K} \times \widetilde{K}}$ via a linear map
$$
\widetilde{\tns{F}}^m(\lambda) \widetilde g = \widetilde u.
$$}

\begin{proposition}
\label{prop1stil}
If Assumption~\ref{S-fractionrep} is valid for all subdomains, then map 
$\widetilde{\tns{F}}^m(\lambda)$ is $(0,\infty)$-Stieltjes function of order $\widetilde{K}$. 
Moreover, the non-negative  definiteness can be replaced by positive-definiteness 
in the conditions 2 and 3 of Definition~\ref{stieltjes}. 
\end{proposition}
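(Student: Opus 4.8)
The plan is to exhibit the global reduced system solved in Algorithm~\ref{alg2red} as a single symmetric matrix pencil and then to recycle, almost verbatim, the spectral argument used for the subdomain functions in Lemma~\ref{lemStil}. Concretely, I would first collect all the layer variables $U_i^k$, $k=1,\ldots,m$, of every subdomain into one global state vector, identifying through the conjugation condition (\ref{contred}) the shared boundary block $\widetilde u_{ij}=\widetilde{\tns P}_{ij}^T U_i^1=\widetilde{\tns P}_{ji}^T U_j^1$ as a single unknown common to $\Omega_i$ and $\Omega_j$, while the hidden layers $U_i^k$ with $k\ge 2$ stay private to $\Omega_i$. Assembling the interior relations (\ref{eqn:GammaUremi}) together with the boundary-coupling relation (\ref{eqn:firsttstep_conj}) through the prolongations $\widetilde{\tns P}_{ij}$ then produces a linear system $(\widehat{\tns A}+\omega^2\widehat{\tns B})U=\widehat{\tns P}\,\widetilde g$ with output $\widetilde u=\widehat{\tns P}^T U$, where $\widehat{\tns P}$ is the prolongation of the boundary data to the global state. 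Hence $\widetilde{\tns F}^m(\omega^2)=\widehat{\tns P}^T(\widehat{\tns A}+\omega^2\widehat{\tns B})^{-1}\widehat{\tns P}$, which has exactly the structural form of the skeleton function $\tns F$ and of the cell functions in Lemma~\ref{lemStil}; since the same $\widehat{\tns P}$ sits on the input and the output side, $\widetilde{\tns F}^m$ is symmetric and its residues will be of the form $v_lv_l^T$.

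The second step is to check that $\widehat{\tns A}=\widehat{\tns A}^T\preceq 0$ and $\widehat{\tns B}=\widehat{\tns B}^T\succ 0$. Assumption~\ref{S-fractionrep} guarantees, via the existence and uniqueness of the S-fraction, that every coefficient $\bG_i^k,\bhG_i^k$ is symmetric positive definite. Within a single cell the block-tridiagonal stiffness of (\ref{eqn:GammaU1st})--(\ref{eqn:GammaUrem}) is negative semidefinite by the telescoping (discrete Green's) identity
\[
-\langle \widehat{\tns A}_i U,U\rangle=\sum_{k=1}^{m-1}\langle \bG_i^k(U^{k+1}-U^k),U^{k+1}-U^k\rangle+\langle \bG_i^m U^m,U^m\rangle\ge 0,
\]
obtained with $U^{m+1}=0$, while the local mass $\mathrm{blockdiag}\big((\bhG_i^1)^{-1},\ldots,(\bhG_i^m)^{-1}\big)$ is positive definite. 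Because assembly is a congruence by the prolongation operators and the cell masses jointly cover every global degree of freedom, the assembled sums inherit symmetry together with $\widehat{\tns A}\preceq 0$ and $\widehat{\tns B}\succ 0$.

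With these definiteness properties the conclusion is immediate. Under the frequency restriction (\ref{noninspectr}) the pencil $\widehat{\tns A}+\omega^2\widehat{\tns B}$ is nonsingular, so $\widetilde{\tns F}^m$ is holomorphic on $\bfC\setminus(0,+\infty)$; diagonalizing $(\widehat{\tns A},\widehat{\tns B})$ yields the partial fraction
\[
\widetilde{\tns F}^m(\omega^2)=\sum_l\frac{v_lv_l^T}{\theta_l+\omega^2},\qquad \theta_l\le 0,\quad v_l=\widehat{\tns P}^T w_l,
\]
exactly as in (\ref{partfr}), which manifestly satisfies the three conditions of Definition~\ref{stieltjes} and proves $(0,\infty)$-Stieltjesness of order $\widetilde K$. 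For the sharpened statement that conditions~2 and~3 hold with strict definiteness, I would follow the same route used for the single cell $\tns F_i$ just after Lemma~\ref{lemStil}: eliminating the hidden layers $U_i^k$, $k\ge 2$, by a Schur complement expresses $\big(\widetilde{\tns F}^m(\omega^2)\big)^{-1}$ as an affine function of the resolvent of the strictly definite interior pencil, whose imaginary (respectively real) part is strictly definite off the positive real axis.

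In my view the main obstacle is not the spectral bookkeeping but the assembly itself: one must verify that the cross-cell coupling of (\ref{eqn:firsttstep_conj}) together with the identification (\ref{contred}) really produces a \emph{symmetric} global stiffness, i.e. that the contributions of $\Omega_i$ and $\Omega_j$ to the shared row are exact transposes of the corresponding off-diagonal coupling blocks (here one uses $U_i^1=\sum_{j}\widetilde{\tns P}_{ij}\widetilde u_{ij}$ and the symmetry of $\bG_i^1$), and that the input prolongation and the output restriction are genuine adjoints. Only then do the poles carry symmetric residues $v_lv_l^T$ rather than unsymmetric $v_lw_l^T$, and only then can the one-line spectral argument of Lemma~\ref{lemStil} be invoked. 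Establishing this symmetric realization, which hinges on the layered S-fraction structure producing a second-order (three-point) coupling between cells, is the crux; everything downstream is routine.
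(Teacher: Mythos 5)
Your argument is correct, but it takes a genuinely different route from the paper's. The paper never assembles a global pencil: it starts from the identity $\bigl(\widetilde{\tns{F}}^m(\lambda)\bigr)^{-1}=\sum_{i}\tns{P}_{\Gamma}^{\Gamma_i}\bigl(\widetilde{\tF}_i^m(\lambda)\bigr)^{-1}{\tns{P}_{\Gamma}^{\Gamma_i}}^T$ (the assembled form of (\ref{BEINCr})), substitutes the S-fraction (\ref{S-fraction}) for each local inverse, and argues level-by-level on the continued fraction that, because every $\bG_i^k,\bhG_i^k$ is symmetric positive definite, each bracketed term is positive definite for $\lambda\in(-\infty,0)$ and has positive definite imaginary part for $\Im\lambda>0$; strictness of the sum follows from non-singularity of $\sum_i\tns{P}_{\Gamma}^{\Gamma_i}{\tns{P}_{\Gamma}^{\Gamma_i}}^T$, and inversion preserves the class. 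You instead build one global symmetric realization $\widehat{\tns{P}}^T(\widehat{\tns{A}}+\omega^2\widehat{\tns{B}})^{-1}\widehat{\tns{P}}$ with $\widehat{\tns{A}}\preceq 0$, $\widehat{\tns{B}}\succ 0$ and read off Stieltjesness from the partial-fraction decomposition, exactly as in Lemma~\ref{lemStil}. The trade-off: the paper's proof is shorter and exploits the S-fraction positivity directly, but hides the nontrivial assembly inside the words ``direct calculation''; your proof makes that assembly explicit and, as a by-product, delivers the global state-space structure that underlies the stability/energy-conservation and Galerkin-equivalence claims of Remark~\ref{MFEM}. Your identified crux --- symmetry of the assembled stiffness, which rests on $U_i^1=\sum_j\widetilde{\tns{P}}_{ij}\widetilde{u}_{ij}$ and on the symmetry of $\bG_i^1$ --- is precisely the counterpart of the paper's unproved inverse-sum identity, so the two proofs leave a comparable amount of verification implicit; one small point you should also note for strict positivity of $\widehat{\tns{B}}$ is that the $\Gamma_{ij}$ partition $\Gamma_i$ under the no-corner assumption (\ref{nocorners}), so the map from $\{\widetilde{u}_{ij}\}_{j}$ to $U_i^1$ has full column rank.
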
 
\begin{proof}
Direct calculation yields
%\tcb{[How is this obtained? it looks like we replace the projected transfer function by the ROM. But then
%there cannot be an equality, because the ROM is just an approximant. Maybe I miss something, 
%but in any case more explanation is needed]}
%{\color{ForestGreen}[ Correct, $\widetilde{\tns{F}}^m(\lambda)$ as defined in Algorithm~\ref{prop1stil} 
%is approximant, obtained via the composition of approximant DtN maps. I am reluctant to put more details, 
%because that does not make it more clear. If you guys have ideas, welcome. Otherwise wait for reviewers.]}
$$
\widetilde{\tns{F}}^m(\lambda)^{-1}=
\sum_{i=1}^{N_c} \tns{P}_{\Gamma}^{\Gamma_i} 
\left( \widetilde{\tF}_i^m(\lambda) \right)^{-1} {\tns{P}_{\Gamma}^{\Gamma_i}}^T,
$$
where the conjugate to $ \tns{P}_{\Gamma}^{\Gamma_i}\in \bfR^{\widetilde K\times  \widetilde K_i}$ is  the projection 
operator from the space of compressed inputs (outputs) supported on $\Gamma$  to the ones supported on $\Gamma_i$.
Substituting  in place of $\left( \widetilde{\tF}_i^m(\lambda) \right)^{-1}$ its representation via the S-fraction 
(\ref{S-fraction}) we obtain 
\begin{equation}
\label{tF-1}
\widetilde{\tns{F}}^m(\lambda)^{-1} =  
\sum_{i=1}^{N_c}\tns{P}_{\Gamma}^{\Gamma_i} \tns{S}_i 
\left(-{ \frac{\lambda}{\bhG_i^1} + \dfrac{1}{\frac{1}{\bG_i^1} +
\dfrac{1}{\ddots \; + \dfrac{1}{-\frac{\lambda}{\bhG_i^m} + \bG_i^m}}}} \right)
\tns{S}_i^T { \tns{P}_{\Gamma}^{\Gamma_i}}^T,
\end{equation}
where $\bG_i^k$, $\bhG_i^k$ are the coefficients of (\ref{S-fraction}) at $\Omega_i$.
Since these matrices  are symmetric positive-definite, the matrices in the brackets in (\ref{tF-1}) are symmetric 
positive-definite for $\lambda \in (-\infty,0)$ and their imaginary parts are symmetric-positive definite for 
$\Im \lambda>0$. Then the same holds for  $\left( \widetilde{\tns{F}}^m(\lambda) \right)^{-1}$ 
(thanks to non-singularity of  $\sum_{i=1}^{N_c}  \tns{P}_{\Gamma}^{\Gamma_i}{ \tns{P}_{\Gamma}^{\Gamma_i}}^T$) 
and obviously the same is true for $\widetilde{\tns{F}}^m(\lambda)$.
\end{proof}

\begin{remark}\label{MFEM}
The Stieltjes property implies stability and energy conservation of the reduced order 
model \cite{ arlinskii2011conservative}.
The strong Stieltjes property of reduced order transfer function stated in Proposition~\ref{prop1stil} is 
intrinsic for Galerkin-type approximations of symmetric nonpositive operator pencil problems. In fact, 
it can be shown, that $\widetilde F$ can be equivalently obtained via the multi-scale Galerkin spectral element 
method using the basis of block-rational Krylov subspaces (\ref{npks}) in every subdomain, {\clr e.g., see \cite{druskin2002three} for the Galerkin-equivalence result in the 1D framework. The CFL limiting  time-step  for the  Galerkin approximation  can not be smaller than the one for the exact (fine grid) problem, as confirmed numerically for our multiscale ROM  in the following section.  By exploiting this 
connection it is also possible to lift condition (\ref{gsupport}) (at the expense of some convergence speed). The rigorous analysis of the Galerkin connection is in progress}
\end{remark}
 
%=====================================================================
\section{Numerical examples}
\label{sect:numex}

We first illustrate the performance of our approach on an example of a scalar wave equation 
\be
\label{eqn:3dac}
\nabla \cdot (\sigma \nabla u) - u_{tt} = g\delta'(t)
\ee 
in a 3D cube $[0;4]^3 km^3$ with homogeneous Dirichlet boundary conditions. {\clr In all the experiments we considered $g$ supported and smoothly varied on the boundary of subdomains.} Two scenarios were considered: 
homogeneous medium with $\sigma=1 km^2/s^2$ and a heterogeneous case shown in Figure~\ref{fig:hetmed}. 
For both cases, source and receiver were centered at $(1;1.5;1.5)$ and $(3;2.5;2.5)$, respectively. 
The operator $\tA$ in (\ref{eqn:wave_fll}) was obtained a from second-order discretization of 
(\ref{eqn:3dac}) on the equidistant reference grid with $160\times 160\times 160$ nodes. {\clr The signal was convolved then with Gaussian wavelet with} cutoff 
frequency about $2\pi s^{-1}$, and, consequently, the corresponding minimum wavelength is about 
$1km$. We split the entire grid into \tcr{$4 \times 4 \times 4 = 64$} equal cubic subdomains $\Omega_i$ 
(see a 2D slice in Figure~\ref{fig:hetmed} on the right), so their edges have a length about a minimum wavelength. 
In all our experiments for all subdomains we used the same number of 
$m_i=m$ in ${\cal{V}}_{m_i}$ (equivalently, ROM layers) and the same number of boundary solutions 
$\widetilde{K}_i=\widetilde{K}$.

\begin{figure}[htb]
\centering
\includegraphics[width=0.55\columnwidth]{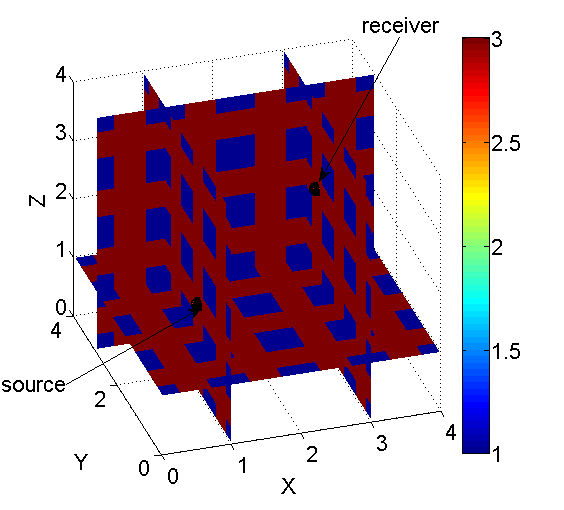}\includegraphics[width=0.45\columnwidth]{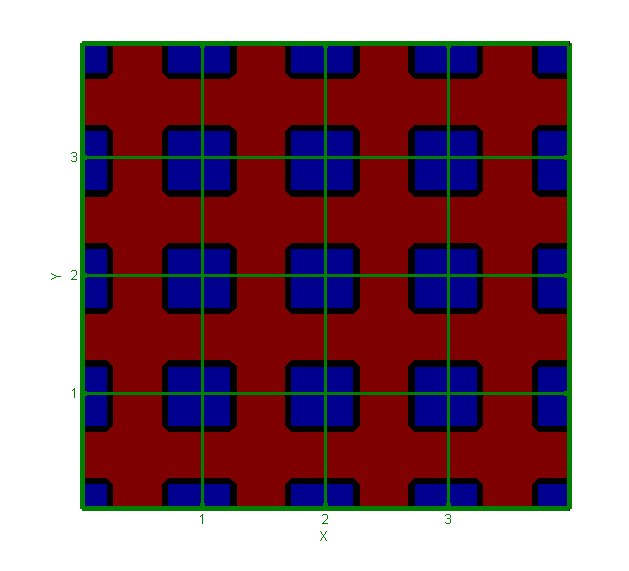}
\caption{Heterogeneous medium for the acoustic problem (\ref{eqn:3dac}) (left) and a slice of its splitting into 
subdomains shown by green lines (right).}

\label{fig:hetmed}
\end{figure}

\tcm{We start with benchmarking our MSSFROM algorithm against fine-scale solution for both scenarios. 
For multi-scale solver we used parameters $m=4$ and $\widetilde{K}=180$. The results of simulations are shown 
in Figure~\ref{fig:scal_sol}. For the homogeneous case, serial computation times were $0.2$ hours and $2.2$ hours 
for the multi-scale and full-scale solvers, respectively, and CFL numbers were approximately the same. 
For the heterogeneous scenario, the serial computation time for the multi-scale solver remained the same $0.2$ hours, 
while the full-scale solver required $3.3$ hours. That is caused by the difference in CFL numbers: for the full-scale 
solver it was lower by a factor of $1.5$.}
\begin{figure}[htb]
\centering
\includegraphics[width=0.45\columnwidth]{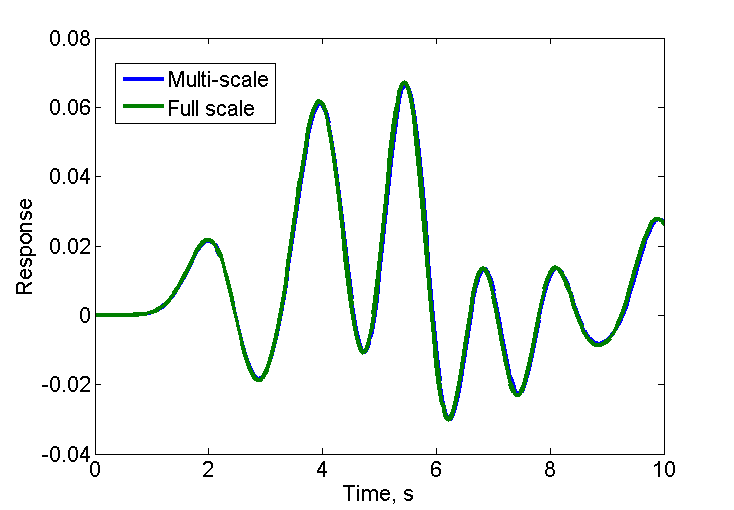}
\includegraphics[width=0.45\columnwidth]{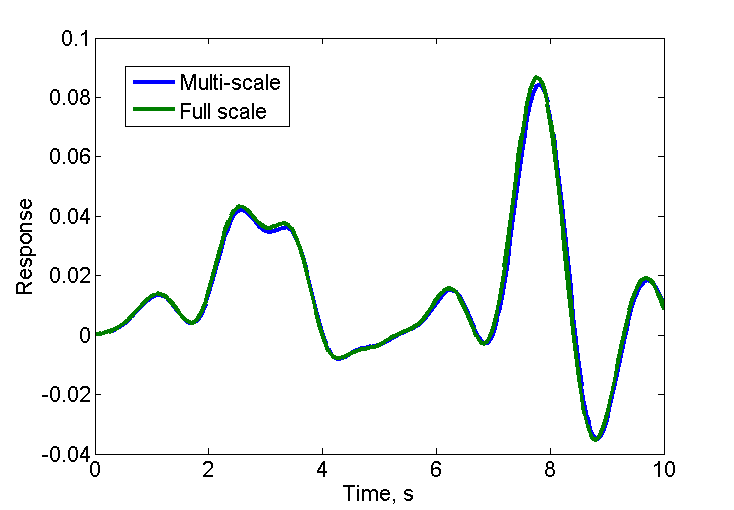}
\caption{Simulations results using MSSFROM and full-scale solver for homegeneous (left) and heterogenous (right) media}
\label{fig:scal_sol}
\end{figure}

{\clr In our next example we perform the numerical dispersion study of MSSFROM. For homogenous case scenario with unit speed $\sigma=1$, we fixed parameters $m=4$ and $K=150$ and changed the cut-off frequency $\omega_{max}$.  We note that, while the size of subspace remained the same, the shape of the boundary basis functions (and, consequently, the subspace ${\cal{V}}_m$) depended on the frequency range under consideration (see equation \ref{trgrstep}). The number of degrees of freedom used for approximating the solution at each coarse cell is $mK$. Hence, assuming the coarse cell edge is of unit length, $\frac{2\pi\sigma\sqrt[3]{mK}}{\omega_{max}}$ degrees of freedom (ppw) were used per wavelength. In Figure~\ref{fig:disp} we plot the error of MSSFROM with respect to the true fine-scale solution without corner set removal. As one can observe, MSSFROM provides good accuracy up to about 4 ppw.
\begin{figure}[htb]
\centering
\includegraphics[width=0.7\columnwidth]{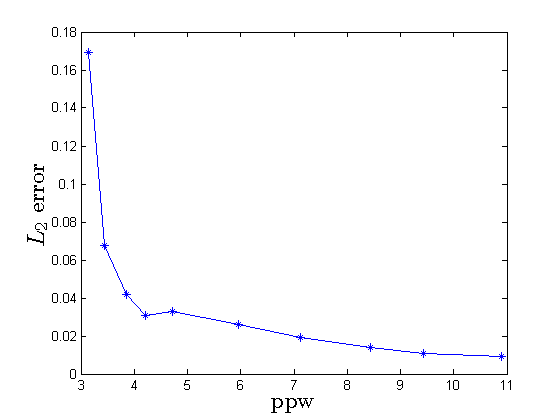}
\caption{$L_2$ error with respect to the number of degrees of freedom per wavelength. For coarse cell edge of unit length that number can be estimated as $\frac{2\pi\sigma\sqrt[3]{mK}}{\omega_{max}}$.}
\label{fig:disp}
\end{figure}
}

Next, we perform the convergence study of the MSSFROM approach. In the first set of experiments we kept the subspace of boundary solutions fixed with $\widetilde{K}=294$ and 
increased $m$. In Figure~\ref{fig:ord_incr_maxerr} we show the maximum absolute error for different value of $m=2,3,4,5$. 
For both scenarios the algorithm demonstrates exponential convergence. Also, for the heterogeneous case $\sigma$ 
varies from $1$ to $3$ while for the homogenous one it is equal to $1$. Therefore, a smaller subspace is required to 
approximate the former case. This explains why the convergence is faster in the heterogeneous case compared to the
homogeneous scenario. 
\begin{figure}[htb]
\centering
\includegraphics[width=0.7\columnwidth]{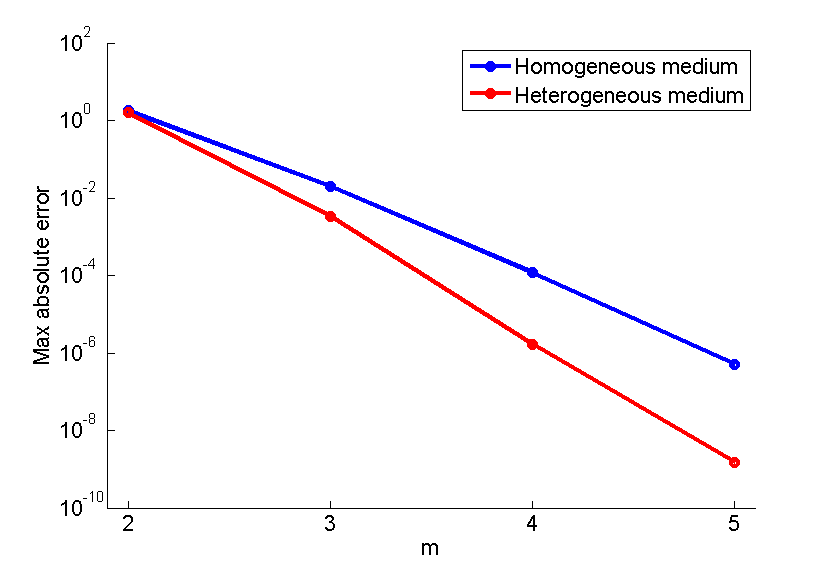}
\caption{Convergence curves with respect to $m$ for homogeneous and heterogeneous cases. 
While both show exponential convergence, the latter is faster because of inhomogeneity.}
\label{fig:ord_incr_maxerr}
\end{figure}

In the next experiment we fixed $m=4$ and were increasing $K_i=K$ for all $i$ simultaneously. Since the length of each 
edge of $\Omega_i$ is about a minimum wavelength, $\sqrt{K/6}$ shows how many degrees of {\clr freedom per wavelength are used for approximating boundary solutions.} In Figure~\ref{fig:bndfnc_incr_maxerr_het_hom_80ref} we show the errors for the homogeneous and 
heterogeneous cases versus $\sqrt{K/6}$. In both cases convergence is exponential, with plateaus while $\sqrt{K/6}$ changes from one integer to the consequent one. 
We also note that the error for the heterogeneous case is worse than for the homogeneous one because of the 
formulation we used to compute boundary solutions. Indeed, for a given face of the subdomain we assumed that 
the medium is uniform in the direction normal to the face. For the heterogeneous case it slightly worsens the approximation 
properties of the boundary solutions subspace. 
\begin{figure}[htb]
\centering
\includegraphics[width=0.7\columnwidth]{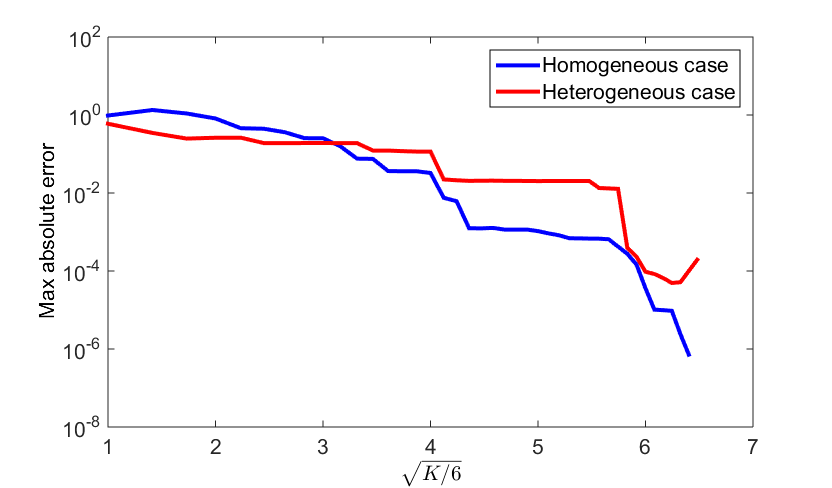}
\caption{Convergence curves for homogeneous and heterogeneous cases. 
Both scenarios show exponential convergence, with plateaus while $\sqrt{K/6}$ changes from one integer to the 
next one. Here $\sqrt{K/6}$  is approximately equal to the number of degrees of the freedom per wavelength for the coarse cell boundary discretization. Heterogeneous case converges slower because of the approximation we used to construct the boundary solutions.}
\label{fig:bndfnc_incr_maxerr_het_hom_80ref}
\end{figure}

In our next experiment we consider a 3D anisotropic elastodynamic problem {\clr
\bea
u_{i,tt}=\sigma_{ij,j},\\
\sigma_{ij}=C_{ijkl}(u_{k,l}+u_{l,k})/2
\eea
in the cube $[0,5]^3 km^3$ with homogeneous 
Dirichlet boundary conditions. Here $u_i$ and $\sigma_{ij}$ are components of displacement vector and stress tensor, respectively. $C_{ijkl}$ are elastic moduli tensor components. In isotropic medium it has a form $C_{ijkl}=\lambda\delta_{ij}\delta_{kl}+\mu(\delta_{ik}\delta_{jl}+\delta_{il}\delta_{jk})$.} The locations of $x$-oriented stress source receiver measuring $x-$ component of the displacement were $(1;2.5;2.5)$ and $(4;2.5;2.5)$, respectively. {\clr The medium (see Figure~\ref{fig:elasfig}) consists of water-filled fracture and cavities (blue) with $\frac{\lambda}{\rho}=2\frac{km^2}{s^2}$, $\frac{\mu}{\rho}=0$ and empty air-filled cavity (white) with $\frac{\lambda}{\rho}=0$, $\frac{\mu}{\rho}=0$ embedded in slow isotropic background (cyan) with $\frac{\lambda}{\rho}=2\frac{km^2}{s^2}$ and $\frac{\mu}{\rho}=1\frac{km^2}{s^2}$. The background is surrounded by fast anisotropic orthorhombic (red) layers with non-zero components of elastic moduli tensor $\frac{C_{1111}}{\rho}=8\frac{km^2}{s^2}$, $\frac{C_{1122}}{\rho}=6\frac{km^2}{s^2}$, $\frac{C_{1133}}{\rho}=5\frac{km^2}{s^2}$, $\frac{C_{2222}}{\rho}=9\frac{km^2}{s^2}$, $\frac{C_{2233}}{\rho}=4\frac{km^2}{s^2}$, $\frac{C_{3333}}{\rho}=10\frac{km^2}{s^2}$, $\frac{C_{1212}}{\rho}=1\frac{km^2}{s^2}$, $\frac{C_{2323}}{\rho}=2\frac{km^2}{s^2}$, $\frac{C_{1313}}{\rho}=3\frac{km^2}{s^2}$. The minimum wavelength at cutoff frequency is approximately $1km$.} The fine reference grid 
with $200\times 200 \times 200$ nodes has been split into $5\times 5\times 5 = 125$ subdomains. To construct the ROMs, 
we used $m=4$ and $K=75$ for all subdomains, i.e. we ended up with $6\times 75\times 4=1800$ degrees of freedom 
per subdomain instead of $40\times 40\times 40\times 3=192000$ for the full-scale reference discretization. 
\begin{figure}[htb]
\centering
\includegraphics[width=0.55\columnwidth]{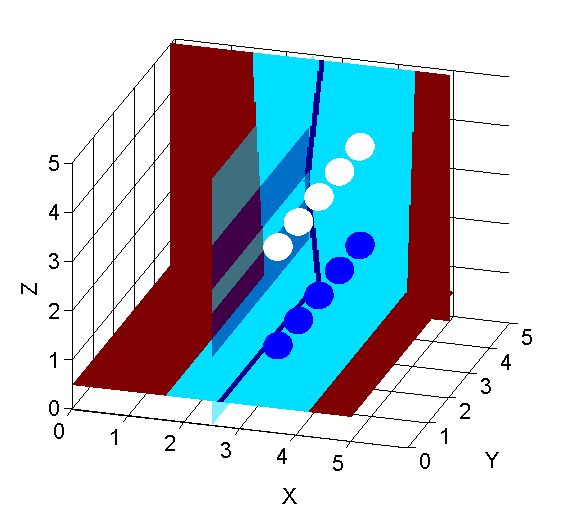}\includegraphics[width=0.45\columnwidth]{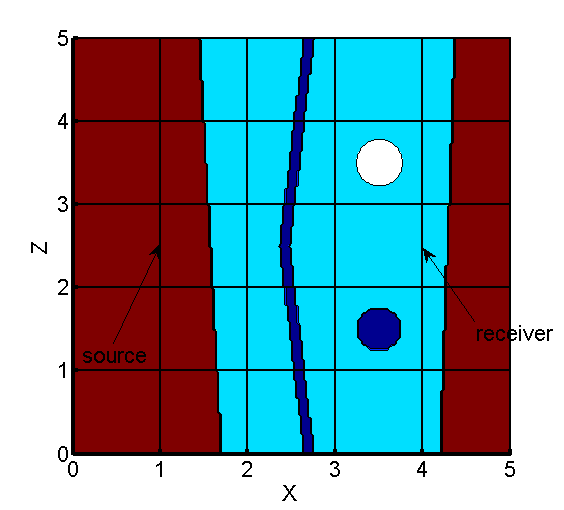}
\caption{3D medium (left) and its slice at $y=2.5 km$ for elastodynamic benchmark. The formation consists of slow 
isotropic (cyan) and fast anisotropic (red) parts with water-filled fracture and cavity (blue) and air-filled cavity (white).
}
\label{fig:elasfig}
\end{figure}

%The numerical results of benchmark of the multiscale algorithm against full-scale solver is shown in Fig. \ref{fig:elasres}. 
\tcr{A comparison of the solution at the receiver (see Figure~\ref{fig:elasfig} for source and receiver positions) between
our multiscale method and the reference solution is given in Figure~\ref{fig:elasres}. Again, here we report the CPU
times for serial implementations of both methods.}
%Here we report the numbers for serial computing only. 
Though the reduction in the number of unknowns was more than two orders of magnitude, the speed up in the simulation 
time (on-line part only vs the reference full-scale simulation) was less: 0.26 hour vs 10.5 hours. 
That is caused by the dense structure of matrices $\bhG^k$ and $\bG^k$ which results in larger computational 
cost (per unknown) of the matrix multiplication in the time-stepping scheme 
(\ref{eqn:intrtstep_td}), (\ref{eqn:bdrytstep}).  \tcm{We also note that CFL numbers for multi-scale and full-scale solvers were approximately the same in this example.}
\tcr{Even though the speedup in the serial case is already substantial, the parallel implementation will favor our
multiscale approach even more due to low ratio of communication and computation costs, as it 
was emphasized in Section~\ref{sect:methsum}. An efficient parallel implementation and testing is a topic of future work}.

\begin{figure}[htb]
\centering
\includegraphics[width=0.7\columnwidth]{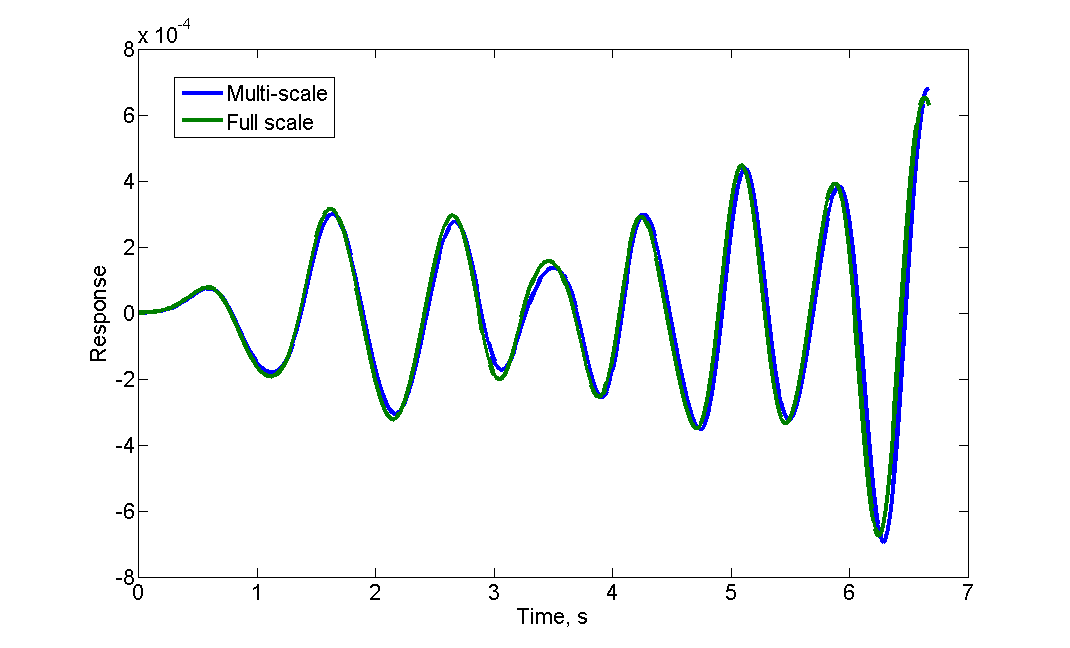}
\caption{3D anisotropic elastodynamic problem. Multi-scale solver against full-scale solver.}
\label{fig:elasres}
\end{figure}

%=====================================================================
\section{Conclusion}
We developed a multi-scale reduced-order modeling approach for efficient large wavefield simulations. Similar to the
conventional multi-scale methods, our algorithm consists of the off-line and the on-line stages. At the first stage a 
reduced-order multi-scale model is constructed. In particular, we start with the fine grid discretization of the spatial 
operator and split the grid into subdomains (coarse cells). The divide-and-conquer-type algorithm was suggested to 
construct the corresponding partitioning of the operator. Then we reduce the number of degrees of freedom in the 
partitioning by substituting it with a high-accuracy reduced-order model. The crucial step of our approach is the 
transformation of the reduced-order model to S-fraction form. This results in the sparsified (block-tridiagonal) stencil 
of the obtained discrete spatial operator, reduction of communications between the subdomains and 
\tcr{greatly reduced overall computational cost}. The off-line stage is embarrassingly parallel and is performed just 
once for the entire time-domain simulations and for all sources. At the on-line stage, the time-domain simulations 
are performed within the obtained multi-scale ROM formulation. The significantly reduced number of unknowns and 
the sparsified stencil results in a speed up of our approach by a factor of up to 30 compared to full-scale simulations 
even on serial computer. Due to the reduced communications between subdomains, we expect even more dramatic 
results when an MPI and/or GPU implementation is ready.

Besides the high performance computing implementation, there remains a number of open algorithmic and theoretical 
problems, successful resolution of which may lead to a better understanding of the developed approach and its further 
improvements. Some of these issues are listed below. 
\begin{itemize}
%\item Efficient construction of boundary subspace, taking in account sharp contrasts across the subdomain boundaries \cite{}
\item Removal of the restriction on inputs and outputs to be supported only at the boundary of course cells.
That can be done by establishing a connection between our approach and conventional multiscale and spectral 
element methods.
\item Making the Courant-Friedrichs-Lewy (CFL) stability condition consistent with the Nyquist limit corresponding to 
the source frequency bandwidth.
\item Obtaining compact representation of matrix S-fraction coefficients, and as result, further decrease the 
computational cost of the time-stepping (online stage).
\end{itemize}
\section{Acknowledgments}
We are grateful to Smaine Zeroug for bringing our attention to the large scale modeling in seismic exploration, 
Leonid Knizhnerman and Yousef Saad for their invaluable contribution to the POD algorithm for the construction of 
boundary subspaces \cite{DKSZ} and also Alexander Aptekarev and Yuri Dyukarev for introducing work 
\cite{dyukarev2004indeterminacy}.

%=====================================================================
\appendix

%\append{Divide and conquer domain partitioning}
\section{Divide and conquer domain partitioning}
\label{sect:divconq} 
\subsection{Two subdomains}
\label{sect:divconq_2sd}

The adjacent nodes  satisfy the following symmetry property that follows straightforwardly from symmetry of $\tA$.
\begin{lemma}\label{lem1}
If $\cA(\mO')\cap \mO''=\emptyset$ then $\cA(\mO'')\cap \mO'=\emptyset$.
Also, since the diagonal entries of $\tA$ are assumed to be all non-zero, we have $\mO'\subset\cA(\mO')$ for any $\mO'$.
\end{lemma}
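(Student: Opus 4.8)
The plan is to reduce both assertions to the single structural fact that $\tA = \tA^T$, so that the adjacency relation $a_{kl} \neq 0$ is symmetric in $k$ and $l$. I first record the working characterization I will use throughout: a node $l$ lies in $\cA(\mO')$ exactly when it is adjacent to at least one node of $\mO'$, that is, when there exists $k \in \mO'$ with $a_{kl} \neq 0$. Everything then follows from unwinding these definitions.

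For the implication I would argue by contraposition (equivalently, by contradiction). Assume $\cA(\mO'') \cap \mO' \neq \emptyset$ and pick a node $k$ in this intersection, so that $k \in \mO'$ and $k \in \cA(\mO'')$. The membership $k \in \cA(\mO'')$ furnishes some $l \in \mO''$ with $a_{kl} \neq 0$. The crucial step is to transpose this adjacency: by symmetry $a_{lk} = a_{kl} \neq 0$, so $l$ is adjacent to the node $k \in \mO'$, whence $l \in \cA(\mO')$. Since also $l \in \mO''$, the intersection $\cA(\mO') \cap \mO''$ is nonempty. This is precisely the contrapositive of the stated implication, so its original form ``$\cA(\mO') \cap \mO'' = \emptyset \Rightarrow \cA(\mO'') \cap \mO' = \emptyset$'' holds.

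The second assertion is immediate from the standing hypothesis that the diagonal of $\tA$ has no zero entries: for every $k \in \mO'$ we have $a_{kk} \neq 0$, so $k$ is adjacent to itself and therefore $k \in \cA(\mO')$, giving $\mO' \subset \cA(\mO')$. I do not anticipate any genuine obstacle here; the whole content of the lemma is that symmetry of $\tA$ turns ``$\mO'$ meets $\cA(\mO'')$'' into a relation symmetric in $\mO'$ and $\mO''$. The only point demanding a little care is the correct handling of the existential quantifier hidden in the definition of $\cA$ when passing from $\cA(\mO'')$ back to $\cA(\mO')$, and keeping the two subsets $\mO'$ and $\mO''$ from being inadvertently swapped in the bookkeeping.
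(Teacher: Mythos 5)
Your proof is correct and is exactly the argument the paper intends: the paper states the lemma without a written proof, noting only that it ``follows straightforwardly from symmetry of $\tA$,'' and your contrapositive unwinding of the adjacency definition together with the transposition $a_{lk}=a_{kl}$ is precisely that straightforward argument. The second assertion is likewise handled correctly via the nonzero diagonal.
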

\begin{prop}\label{prop1}
Let there exist a nonempty $\mO  \subsetneq  \Omega$, such that $\cA(\mO)\subsetneq  \Omega$. 
Then there exist %non-intersecting 
\tcr{disjoint} subdomains $\mO_1,\mO_2\ne \emptyset$ and $\Gamma$ satisfying conditions 
$$
\Omega = \mO_1\cup \mO_2\cup\Gamma, \mbox{ and }  \Gamma=\cA(\mO_1)\setminus \mO_1=\cA(\mO_2)\setminus \mO_2. 
$$
\end{prop}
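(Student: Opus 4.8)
The plan is to produce the three sets explicitly from the given $\mO$ and then verify the two boundary identities, using only the symmetry of adjacency recorded in Lemma~\ref{lem1} together with the containment $\mO\subseteq\cA(\mO)$. The conditions $\Gamma=\cA(\mO_1)\setminus\mO_1=\cA(\mO_2)\setminus\mO_2$ say exactly that $\mO_1$ and $\mO_2$ have no edges between them and that every node of the separator $\Gamma$ is adjacent to \emph{both} sides; the difficulty is to meet these requirements simultaneously.

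First I would set $\mO_2:=\Omega\setminus\cA(\mO)$, which is nonempty precisely because $\cA(\mO)\subsetneq\Omega$. The tempting choice $\mO_1=\mO$ with $\Gamma=\cA(\mO)\setminus\mO$ gives the first identity for free but may fail the second, since a node adjacent to $\mO$ need not be adjacent to $\mO_2$. To repair this I would instead absorb such ``one-sided'' nodes into the first block by defining $\mO_1:=\Omega\setminus\cA(\mO_2)$ and $\Gamma:=\cA(\mO_2)\setminus\mO_2$. Since $\cA(\mO)\cap\mO_2=\emptyset$, Lemma~\ref{lem1} yields $\cA(\mO_2)\cap\mO=\emptyset$, hence $\mO\subseteq\mO_1$ and in particular $\mO_1\ne\emptyset$. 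Because $\mO_2\subseteq\cA(\mO_2)$, the three sets are pairwise disjoint with $\Omega=\mO_1\cup\mO_2\cup\Gamma$.

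It then remains to verify $\cA(\mO_1)\setminus\mO_1=\Gamma$, the identity $\cA(\mO_2)\setminus\mO_2=\Gamma$ being true by construction. For the inclusion $\cA(\mO_1)\setminus\mO_1\subseteq\Gamma$, I would note $\mO_1\cap\cA(\mO_2)=\emptyset$, so Lemma~\ref{lem1} gives $\cA(\mO_1)\cap\mO_2=\emptyset$ and hence $\cA(\mO_1)\subseteq\mO_1\cup\Gamma$. The reverse inclusion $\Gamma\subseteq\cA(\mO_1)\setminus\mO_1$ is the heart of the argument: any $\gamma\in\Gamma$ lies outside $\mO_2=\Omega\setminus\cA(\mO)$ and outside $\mO_1\supseteq\mO$, so $\gamma\in\cA(\mO)\setminus\mO$; thus $\gamma$ is adjacent to some node of $\mO\subseteq\mO_1$, giving $\gamma\in\cA(\mO_1)\setminus\mO_1$.

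The main obstacle is exactly this last step, i.e.\ forcing the two boundaries to coincide. The key device is to build $\mO_2$ from the original $\mO$ and then take $\mO_1$ as the complement of $\cA(\mO_2)$: the resulting containment $\mO\subseteq\mO_1$ is what guarantees that every separator node, which is automatically adjacent to $\mO$, also touches the $\mO_1$ side. I expect no further subtleties, since everything reduces to the symmetry of $\tA$ and the elementary facts in Lemma~\ref{lem1}.
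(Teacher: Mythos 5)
Your proposal is correct and follows essentially the same route as the paper: the identical constructive choice $\mO_2=\Omega\setminus\cA(\mO)$, $\mO_1=\Omega\setminus\cA(\mO_2)$, $\Gamma=\cA(\mO_2)\setminus\mO_2$, with Lemma~\ref{lem1} applied twice and the final inclusion $\Gamma\subseteq\cA(\mO_1)$ obtained from $\mO\subseteq\mO_1$ and $\Gamma\subseteq\cA(\mO)$. Your write-up is in fact slightly more explicit than the paper's about verifying disjointness and the direction of each application of the lemma, but there is no substantive difference.
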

\begin{proof}
The proof is constructive. We choose $\mO_2=\Omega\setminus \cA(\mO)$, 
$\mO_1=\Omega\setminus \cA(\mO_2)$, $\Gamma=\Omega\setminus \mO_1\setminus \mO_2$. 
By construction $\mO_1,\mO_2\ne \emptyset$, all three subdomains \tcr{are disjoint} and $\Omega$ is their union.
Then 
$$
\Gamma=\Omega\setminus \mO_1\setminus \mO_2=
\Omega\setminus (\Omega\setminus \cA(\mO_2))\setminus \mO_2=
\cA(\mO_2)\setminus \mO_2.
$$
By construction, $\mO_1$ is a complement to $\cA(\mO_2)$, so from Lemma~\ref{lem1} we conclude, 
that $\cA(\mO_1)\cap \mO_2=\emptyset$. Hence, $\cA(\mO_1)\setminus \mO_1\subset\Gamma$ and, to show that 
$\Gamma=\cA(\mO_1)\setminus \mO_1$, it remains to prove that all elements of $\Gamma$ are in $\cA(\mO_1)$. 
Again, from the definition of $\mO_2$ and Lemma~\ref{lem1} it follows that $\mO\cap\cA(\mO_2)=\emptyset$ and, 
consequently, $\Gamma\cap \mO=\emptyset$. Therefore, since $\Gamma=\cA(\mO)\setminus\mO_1$, we have 
$\mO_1\supseteq \mO$ and, obviously, $\cA(\mO_1)\supseteq \cA(\mO)$. 
By construction $\Gamma\subset \cA(\mO)$, so $\Gamma\subset \cA(\mO_1)$.
\end{proof}

Obviously, any $\tA$ with at least one trivial non-diagonal element satisfies the condition of Proposition~\ref{prop1}.
The  case  $\Gamma=\emptyset$ yields decoupled problems on $\mO_1$ and $\mO_2$.
The proof of the Proposition~\ref{prop1} provides an algorithm for partitioning of a {\it sparse} matrix $\tA$.
We denote by $\Omega_{i} = \mO_i \cup \Gamma$ of size (number of nodes) $N_i$ and by 
$\tA^i \in \bfR^{N_i \times N_i}$ (with elements $a^i_{kl}$) the split operators defined on the nodes of 
$\Omega_i$, $i=1,2$ defined via the following three step algorithm. 

\begin{alg}
\label{alg1}
According to Proposition~\ref{prop1}, $a_{kl}=0$ if $k$ and $l$ belong to different $\mO_i$, 
so such elements are excluded.
\begin{enumerate}
\item  For all  pairs of indices $(k,l)$,  such that at least one of them belongs to
$\mO_i$, $i = 1,2$ set 
$$
a_{kl}^i = a_{kl}.
$$
\item For all  $k \ne l$, such that $k,l \in \Gamma$, 
set 
$$
a_{kl}^1 = \alpha_{kl} a_{kl}, \quad a_{kl}^2 = (1-\alpha_{kl}) a_{kl},
$$ 
where $0 \le \alpha_{kl} = \alpha_{lk} \le 1$.
\item For $k \in \Gamma$ set $a_{kk}^i = a_{kk} \dfrac{s_i}{s_1+s_2}$, $i = 1, 2$,
where 
$$
s_i = \sum_{l \ne k} |a_{kl}^i|.
$$
\end{enumerate}
\end{alg}
For the splitting of (\ref{eqn:wave_fll_fd}) we also need the splitting of \tcr{$\tB^i = \diag \{b_k^i\}$}
via a simplified version of Algorithm~\ref{alg1} performed as follows
\tcr{\be
\label{alg1B}
b_k^i = \left\{ \begin{tabular}{ll}
$b_k$, & if $k \in \mO_i$, \\
$((i-1) + (-1)^{i+1} \beta_k) b_k$, & otherwise.
\end{tabular} \right.
\ee}
where $0 < \beta_k < 1$ and $i = 1, 2$.
Normally, $\alpha_{ij}$ and $\beta_i$ are chosen to approximately balance the condition 
numbers of partitioned problems.

By construction, matrices $\tns{A}^i$, $\tB^i$ obtained by Algorithm~\ref{alg1} satisfy the identity 
\tcr{$$
\tA + \omega^2 \tB = \sum_{i=1}^{2}{\tns{P}_i (\tA^i + \omega^2 \tB^i) \tns{P}_i^T}, 
\qquad \forall \omega^2\in \bfC. 
$$}

For the stability of the reduced order model it is important that $\tA^i$ inherit 
the non-positive definiteness of $\tA$. The following proposition shows an even stronger 
result for the important class of diagonally dominant matrices arising from discretization of second order elliptic operators. 
\begin{prop}
\label{prop2}
Let $\tA$ be diagonally dominant, i.e., $\forall k$  $|a_{kk}| \ge \sum_{l \ne k} |a_{kl}|$.
Then $\tA^i$, $i = 1, 2$ obtained via Algorithm~\ref{alg1} are also diagonally dominant non-positive 
 definite  matrices. 
\end{prop}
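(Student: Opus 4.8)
The plan is to verify the two asserted properties — diagonal dominance and non-positive definiteness — in that order, deducing the latter from the former together with a sign condition on the diagonal via Gershgorin's theorem. Throughout I will use the standing hypothesis $\tA=\tA^T\le 0$ together with symmetry of each split operator, which is immediate: step~1 of Algorithm~\ref{alg1} copies the symmetric entries $a_{kl}=a_{lk}$, step~2 uses the symmetric weights $\alpha_{kl}=\alpha_{lk}$, and step~3 only modifies the diagonal.

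First I would prove diagonal dominance by splitting into the interior rows $k\in\mO_i$ and the boundary rows $k\in\Gamma$. For an interior row $k\in\mO_i$, the decisive structural fact — coming from the partitioning of Proposition~\ref{prop1} (equivalently Lemma~\ref{lem1}), namely $\cA(\mO_1)\cap\mO_2=\emptyset$ — is that $k$ has no neighbor in the opposite interior; hence every neighbor of $k$ already lies in $\Omega_i=\mO_i\cup\Gamma$ and step~1 leaves the entire row untouched, $a^i_{kl}=a_{kl}$. Diagonal dominance of that row is then inherited verbatim from $\tA$.

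The heart of the argument is the boundary case $k\in\Gamma$. Here steps~1--2 define the off-diagonal entries of the $k$-th row of both $\tA^1$ and $\tA^2$, and by construction $s_i=\sum_{l\ne k}|a^i_{kl}|$ is exactly the off-diagonal absolute row sum of $\tA^i$. The key computation I would carry out is that these two row sums reassemble the original one,
\be
s_1+s_2=\sum_{l\in\mO_1}|a_{kl}|+\sum_{l\in\mO_2}|a_{kl}|+\sum_{\substack{l\in\Gamma\\ l\ne k}}\big(\alpha_{kl}+(1-\alpha_{kl})\big)|a_{kl}|=\sum_{l\ne k}|a_{kl}|,
\ee
because the $\mO$-neighbors are routed entirely to one side in step~1 and the $\Gamma$--$\Gamma$ weights sum to one in step~2. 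Since $k\in\Gamma=\cA(\mO_i)\setminus\mO_i$ is adjacent to both interiors, we have $s_1,s_2>0$, so step~3 is well defined and, using diagonal dominance $|a_{kk}|\ge\sum_{l\ne k}|a_{kl}|=s_1+s_2$ of $\tA$,
\be
|a^i_{kk}|=|a_{kk}|\,\frac{s_i}{s_1+s_2}\ge (s_1+s_2)\,\frac{s_i}{s_1+s_2}=s_i,
\ee
which is precisely diagonal dominance of the boundary rows of $\tA^i$.

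Finally I would deduce non-positive definiteness. Because $\tA\le 0$, its diagonal entries satisfy $a_{kk}=e_k^T\tA\, e_k\le 0$; hence $a^i_{kk}=a_{kk}\le 0$ on interior rows and $a^i_{kk}=a_{kk}\,s_i/(s_1+s_2)\le 0$ on boundary rows, the weight being non-negative. A symmetric matrix that is diagonally dominant with non-positive diagonal is negative semidefinite: by Gershgorin each eigenvalue $\lambda$ obeys $\lambda\le a^i_{kk}+\sum_{l\ne k}|a^i_{kl}|\le a^i_{kk}+|a^i_{kk}|=0$ for the relevant index $k$. I expect the only delicate point to be the bookkeeping of the three node classes $\mO_1,\mO_2,\Gamma$ so that in the identity for $s_1+s_2$ no neighbor is double-counted or omitted; once that row-sum identity and the sign of $a_{kk}$ are in hand, the non-positivity and Gershgorin steps are routine.
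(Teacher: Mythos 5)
Your proof is correct and follows essentially the same route as the paper's: interior rows are copied verbatim, boundary rows are handled via the scaling $|a^i_{kk}|=|a_{kk}|\,s_i/(s_1+s_2)\ge s_i$, and negative semidefiniteness then follows from symmetry, diagonal dominance and the non-positive diagonal (Gershgorin). The only difference is that you make explicit the row-sum identity $s_1+s_2=\sum_{l\ne k}|a_{kl}|$ and the positivity of $s_1,s_2$, which the paper's displayed equality uses tacitly.
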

\begin{proof}
According to Proposition~\ref{prop1}, if $l \in \mO_i$ then $\forall k$ $a_{kl}^i = a_{kl}$ .
For the rows with diagonals $k$ in $\Gamma$, from the dominance of the corresponding row of $\tA$ and 
from Algorithm~\ref{alg1} we obtain diagonal dominance
$$
\sum_{l \ne k} |a^i_{kl}| = \frac{s_i}{s_1+s_2} \sum_{l \ne k} |a_{kl}| \le 
\frac{s_i}{s_1+s_2} |a_{kk}| = |a_{kk}^i|.
$$
From the non-positive  definiteness and diagonal dominance of $\tA$ its diagonal is negative and 
so are the diagonals of $\tA^i$, that together with diagonal dominance of the latter yields their non-positive
definiteness.
\end{proof}

%=====================================================================
\subsection{Multidomain algorithm}\label{sect:divconq_mltsd}
\tcr{
We employ Algorithm~\ref{alg1} as an elementary step of our ``divide and conquer'' graph partitioning algorithm
presented below.}
%Assume that we start with the initial partition $\Omega_1=\Omega$, $\tA_1=\tA$, $\tB_1=\tB$ and an index $p=0$.
%Suppose that after previous $p-1$ partitioning steps we obtained partitions $\Omega_i$, $\tA_i$, $\tB_i$,  $i=1,\ldots,p$, 
%$p \ge 1$. Then the $p^{\text th}$ partition is obtained as follows.
\begin{alg}\label{alg2}

\tcr{Initialize the partition with $\Omega_1=\Omega$, $\tA_1=\tA$, $\tB_1=\tB$.}

\tcr{For $p=1,2,\ldots$ perform the following:}
\begin{enumerate}
\item  From $\Omega_i$, $i=1,\ldots,p$ choose $\Omega_{i'}$ to be partitioned 
such that $\Omega_{i'}$ satisfies the conditions of Proposition~\ref{prop1}, i.e., it is splittable by Algorithm~\ref{alg1}.
%\tcb{[check that this is what is meant]}
\item Renumber $\Omega_i$ and $\tA_i$, so that $\Omega_{i'}$, $\tA_{i'}$ and $\tB_{i'}$ become  
$\Omega_{p}$, $\tA_{p}$ and $\tB_{p}$ respectively.
\item Set (with some abuse of notation) $\Omega=\Omega_{p}$.  Find $\mO_1$, $\mO_2$ and $\Gamma$ 
satisfying the conditions of Proposition~\ref{prop1}. 
\item Set $\tA=\tA_p$ and apply Algorithm~\ref{alg1} and (\ref{alg1B}) to obtain 
\tcr{$\tA^1$, $\tA^2$ and $\tB^1$, $\tB^2$} respectively. 
%\tcb{[I tried to fix notation here and changed the notation accordingly in the previous section]}
\item Set $\Omega_{p} = \Omega_1$, $\Omega_{p+1}=\Omega_2$, 
\tcr{$\tA_{p} = \tA^1$, $\tA_{p+1} = \tA^2$, $\tB_{p} = \tB^1$, $\tB_{p+1} = \tB^2$}.
\end{enumerate}
\end{alg} 

The partitioning is repeated until the subdomain size is reduced to a desirable level,
or until $\tA_i$ become full. We assume that after $N_c-1$ steps of Algorithm~\ref{alg2} we partition $\tA$
into submatrices $\tA_i\in\bfR^{N_i\times N_i}$, $i=1,\ldots,N_c$ satisfying (\ref{splitN_c}).
Note that Proposition~\ref{prop2} can be recursively extended to $\tA_i$, $i=1,\ldots,N_c$.

%We will discuss later in more details the criteria of the choice of the subdivision in the Algorithm~\ref{alg1} 
%from the point of view of optimization of the computational cost.                                                                                                                                                                 

{\clr \section{Corner set removal algorithm}\label{sect:cor_set_rem}
          
Let node $k$ belong to the ``corner'' set. Then for each $\Gamma_{ij}$ such that 
$k \in \Gamma_{ij}$ we introduce a hanging node $k_{ij}$ by copying $k$ and all elements $\left(\tns{P}_i\tns{A}_i\tns{P}_i^T+\tns{P}_j\tns{A}_j\tns{P}_j^T\right)_{kl}$ and $\left(\tns{P}_i\tns{B}_i\tns{P}_i^T+\tns{P}_j\tns{B}_j\tns{P}_j^T\right)_{kl}$. In particular, we define the elements of the modified matrix $\tA'$ as well as diagonal matrix $\tB'$  as
$$(\tA')_{k_{ij}l}=(\tA')_{lk_{ij}}=\left(\tns{P}_i\tns{A}_i\tns{P}_i^T+\tns{P}_j\tns{A}_j\tns{P}_j^T\right)_{kl}$$
and 
$$(\tB')_{k_{ij}k_{ij}}=\left(\tns{P}_i\tns{B}_i\tns{P}_i^T+\tns{P}_j\tns{B}_j\tns{P}_j^T\right)_{kk}.$$
Let ${\cal{H}}(k)$ be a set of all the constructed nodes with parent node $k$. 
Once the process is finished for all $\Gamma_{ij}$ containing node $k$, we remove node $k$ and all corresponding elements 
of $\tA$ and $\tB$.

}
%=====================================================================
%\append{Computation of matrix S-fraction coefficients via block-Lanczos algorithm}
\section{Computation of matrix S-fraction coefficients via block-Lanczos algorithm}
\label{appC}

Matrix S-fraction coefficients $\bG^k$ and $\bhG^k$ can be computed by applying a block
version of Lanczos iteration. We use Lanczos approach because it allows to implement the well 
developed techniques of a standard block-Lanczos algorithm \cite{Montgomery1995} to the 
matrix pencil $(\tns{A}^m, \tns{B}^m)$ to obtain a unitary 
$\tns{Q} \in \bfR^{\widetilde{K} m \times \widetilde{K} m}$ such that 
\be
\widetilde{\tns{F}}^m(\omega^2) = \tns{R}^T 
\left( \tns{T} + \omega^2 \tns{I} \right)^{-1} \tns{R},
\label{eqn:NtDtri}
\ee
with 
\begin{equation}
\tns{T} = \tns{Q}^T {\tA}^m\tns{Q}, \quad
\tns{R} = \tns{Q}^T \tns{S}^m = [\tns{\beta}_1, 0, 0, \ldots, 0 ]^T.
\end{equation}
Here $\tns{T}$ is a Hermitian block tridiagonal matix
\tcr{
$$
\tns{T} = 
\begin{bmatrix}
\tns{\alpha}_{1}& \tns{\beta}_{2} &0 & 0& \ldots & 0\\
\tns{\beta}_{2}& \tns{\alpha}_{2} &\tns{\beta}_3 & 0&  \ldots & 0\\
\vdots &\vdots& \vdots& \vdots &\ddots & \vdots\\
0& \ldots & 0 & \tns{\beta}_{m-1} & \tns{\alpha}_{m-1} & \tns{\beta}_{m} \\
0& \ldots & 0 & 0 & \tns{\beta}_{m} & \tns{\alpha}_{m} \\
\end{bmatrix}
$$}
with blocks $\tns{\alpha}_k = \tns{\alpha}^T_k \in \bfR^{\widetilde{K} \times \widetilde{K}}$ 
and $\tns{\beta}_k = \tns{\beta}^T_k \in \bfR^{\widetilde{K} \times \widetilde{K}}$, 
\tcr{$k = 1,2,\ldots,m$}.

The transfer function (\ref{eqn:NtDtri}) can be expressed as
\be
\widetilde{\tns{F}}^m  = \tns{\beta}_1 \tns{W}_1,
\label{eqn:NtDW}
\ee 
where matrices $\tns{W}_k \in \mathbb{R}^{\widetilde{K} \times \widetilde{K}}$, 
\tcr{$k = 1, 2, \ldots, m+1$}
satisfy the three-term relations that make apparent the connection to finite-difference schemes:
\begin{equation}
\begin{split}
\tns{\alpha}_1 \tns{W}_1 + 
\tns{\beta}_2 \tns{W}_2 + \omega^2 \tns{W}_1 & = \tns{\beta}_1, \\
\tns{\beta}_k \tns{W}_{k-1} + 
\tns{\alpha}_k \tns{W}_k + 
\tns{\beta}_{k+1} \tns{W}_{k+1} + \omega^2 \tns{W}_k & = \tns{0},
\end{split}
\label{eqn:tridiagW}
\end{equation}
with $\tns{W}_{m+1} = 0$.

A second change of coordinates can simplify (\ref{eqn:NtDW}) even further. Specifically, we can  
transform (\ref{eqn:tridiagW}) to (\ref{eqn:GammaUrem}). The corresponding transformation is 
block-diagonal and is performed recursively for $k = 1, 2, \ldots, m$ using
\tcr{
\begin{equation}
\begin{split}
\bhG^{k} & = \tns{G}_{k} {\tns{G}_{k}}^T\\
\bG^{k} & = - {\tns{G}_{k}}^{T}\tns{\alpha}_{k} {\tns{G}_{k}}^{-1} - \bG^{k-1}, \\
\tns{U}^{k} & = \tns{G}_{k} \tns{W}_{k},\\
\tns{G}_{k+1} & = \left[ {\tns{G}_k}^{T} \bG^k \right]^{-1} \tns{\beta}_{k+1}, 
\end{split}
\label{eqn:GammaTrans}
\end{equation}}
starting with $\tns{G}_1 = \tns{\beta}_1$, $\bG^0 = 0$. 
%The transfer function in these coordinates is trivial
\tcr{After this transformation the transfer function becomes}
$$
\widetilde{\tns{F}}^m(\omega^2)  = \tns{U}^1 = 
\tns{E}_1^T \left( \tns{L}+\omega^2 \tns{I} \right) \tns{E}_1 \bhG^1
%\label{eqn:tfgamma}
$$
where $\tns{E}_1= [\tns{I}, 0, 0, \ldots, 0 ]^T\in\bfR^{\widetilde{K}m\times\widetilde{K}}$ 
\tcr{and $\tns{L}$ is the block-tridiagonal difference operator defined by 
(\ref{eqn:GammaU1st})--(\ref{eqn:GammaUrem}). }

%=====================================================================
\section{Correction for corner set nodes}
\label{rem:solcorrap}

For any $i \ne j$ the node-wise MSSFROM solution $\widetilde{u}$ at the boundary nodes of $\Gamma_{ij}$ can 
be obtained as $\tns{S}_{ij} \widetilde{u}_{ij}$. Indeed, by construction of the modified graph without the corner set, 
we note that $\sum_{l \in {\cal{H}}(k)}{\widetilde{u}_{l,tt} b_{ll}}$ must approximate the fine-scale balance 
equation at the corner set node $k$, i.e. the term $u_{k,tt} b_{kk}$. Hence, assuming $\widetilde{u}_l$ is accurate 
at previous and current time step, we end up with the following correction procedure:

\tcm{
\begin{enumerate}
\item For all $k \in \Gamma^\cap$ compute MSSFROM solutions $\widehat{u} \in \bfR^{K_h}$ at hanging nodes 
$l \in {\cal{H}}(k)$ (we denote by $K_h$ the total number of hanging nodes): 
$$
\widehat{u}_ l =e_l^T \tns{S}_{ij} \widetilde{u}_{ij},
$$ 
where $l \in \Gamma_{ij}$ and $e_l \in \bfR^{K_{ij}}$ is the unit vector with component $1$ at node $l$.
\item For all $k \in \Gamma^\cap$ define MSSFROM solution at the corner set node $k$ as 
$\frac{1}{b_{kk}} \sum_{l' \in{\cal{H}}(k)}{\widehat{u}_{l'} b_{l'l'}}$ 
and then compute the correction term $\delta \widehat{u} \in \bfR^{K_h}$ at hanging nodes $l \in {\cal{H}}(k)$ as 
$$
\delta \widehat{u}_l = \frac{1}{b_{kk}}\sum_{l' \in {\cal{H}}(k)}{\hat{u}_{l'} b_{l'l'}} - \widehat{u}_l.
$$
\item Update $\widetilde{u}_{ij}$ by adding the correction term 
$$
\delta \widetilde{u}_{ij} = \tns{S}^T_{ij}{\tns{P}^H_{ij}}^T \delta \widehat{u},
$$ 
where $\tns{P}^H_{ij} \in \bfR^{K_h \times K_{ij}}$ is the prolongation operator from $\Gamma_{ij}$ to 
hanging nodes $\cup_{k \in \Gamma^\cap}{\cal{H}}(k)$: 
\be
\left( \tns{P}^H_{ij} x \right)_k = 
\left\{ \begin{tabular}{ll}
$x_k$, & if  $k \in \Gamma_{ij}$ \\
$0$, &  otherwise
\end{tabular}\right. , 
\quad \mbox{for } x \in \bfR^{K_{ij}}.
\ee
\end{enumerate}}
\bibliography{siam_sfmsfv}
\end{document}